\newtheorem{remark}{Remark}
\newtheorem{theorem}{Theorem}
\theoremstyle{definition}
\begin{document}

\begin{frontmatter}

%% Title, authors and addresses

%% use the tnoteref command within \title for footnotes;
%% use the tnotetext command for theassociated footnote;
%% use the fnref command within \author or \affiliation for footnotes;
%% use the fntext command for theassociated footnote;
%% use the corref command within \author for corresponding author footnotes;
%% use the cortext command for theassociated footnote;
%% use the ead command for the email address,
%% and the form \ead[url] for the home page:
%% \title{Title\tnoteref{label1}}
%% \tnotetext[label1]{}
%% \author{Name\corref{cor1}\fnref{label2}}
%% \ead{email address}
%% \ead[url]{home page}
%% \fntext[label2]{}
%% \cortext[cor1]{}
%% \affiliation{organization={},
%%             addressline={},
%%             city={},
%%             postcode={},
%%             state={},
%%             country={}}
%% \fntext[label3]{}

\title{A Predictor Corrector Convex Splitting Method for Stefan Problems Based on Extreme Learning Machines}

% use optional labels to link authors explicitly to addresses:
\author[1]{Siyuan Lang}
% \affiliation[label1]{organization={Ministry of Education Key Laboratory for NSLSCS, School of Mathematical Sciences, Nanjing Normal University},
%             % addressline={},
%             city={Nanjing},
%             % postcode={},
%             % state={},
%             country={China}}
\author[1]{Zhiyue Zhang\corref{cor1}}
\ead{zhangzhiyue@njnu.edu.cn}
\cortext[cor1]{Corresponding author.}

\affiliation[1]{organization={Ministry of Education Key Laboratory for NSLSCS, School of Mathematical Sciences, Nanjing Normal University},
            % addressline={},
            city={Nanjing},
            % postcode={},
            % state={},
            country={China}}

% \author{Siyuan Lang} %% Author name

% %% Author affiliation
% \affiliation{organization={},%Department and Organization
%             addressline={}, 
%             city={},
%             postcode={}, 
%             state={},
%             country={}}

%% Abstract
\begin{abstract}
Solving Stefan problems via neural networks is inherently challenged by the nonlinear coupling between the solutions and the free boundary, which results in a non-convex optimization problem. To address this, this work proposes an Operator Splitting Method (OSM) based on Extreme Learning Machines (ELM) to decouple the geometric interface evolution from the physical field reconstruction. Within a predictor-corrector framework, the method splits the coupled system into an alternating sequence of two linear and convex subproblems: solving the diffusion equation on fixed subdomains and updating the interface geometry based on the Stefan condition. A key contribution is the formulation of both steps as linear least-squares problems; this transforms the computational strategy from a non-convex gradient-based optimization into a stable fixed-point iteration composed of alternating convex solvers. From a theoretical perspective, the relaxed iterative operator is shown to be locally contractive, and its fixed points are consistent with stationary points of the coupled residual functional. Benchmarks across 1D to 3D domains demonstrate the stability and high accuracy of the method, confirming that the proposed framework provides a highly accurate and efficient numerical solution for free boundary problems.
\end{abstract}

%%Graphical abstract
%\begin{graphicalabstract}
%\includegraphics{grabs}
%\end{graphicalabstract}

%%Research highlights
%\begin{highlights}
%\item Research highlight 1
%\item Research highlight 2
%\end{highlights}
%
%% Keywords
\begin{keyword}
%% keywords here, in the form: keyword \sep keyword
Operator splitting method \sep Stefan problem \sep Extreme Learning Machines \sep Predictor Corrector \sep Convex optimization
%% PACS codes here, in the form: \PACS code \sep code

%% MSC codes here, in the form: \MSC code \sep code
%% or \MSC[2008] code \sep code (2000 is the default)
\MSC[2020] 35R35 \sep 65M12 \sep 65M70 \sep 65K10
\end{keyword}

\end{frontmatter}

%% Add \usepackage{lineno} before \begin{document} and uncomment 
%% following line to enable line numbers
%% \linenumbers

%% main text
%%

%% Use \section commands to start a section
\section{Introduction}
\label{sec:introduction}
Stefan problems model phase change processes involving moving interfaces and are encountered in a wide range of applications, including alloy solidification and crystal growth~\cite{boettinger2002phase,langer1980instabilities,beckermann2002modelling,sethian1992crystal}.
Mathematically, they are formulated as moving boundary problems posed on time-dependent domains.
A key feature of such systems is that the interface which separates distinct phases (e.g., solid and liquid) is unknown \textit{a priori} and must be determined as part of the solution.
The evolution of the phase interface is controlled by the coupling between the heat equations posed in each subdomain and the interface conditions.
In particular, the normal velocity of the interface is determined by the discontinuity of the heat flux across the boundary, commonly referred to as the Stefan condition.

Due to the nonlinearity of the free boundary, analytical solutions are rare and typically limited to ideal one-dimensional configurations, often derived using similarity transformation methods~\cite{briozzo2007explicit,liu2012exact, voller2004analytical, rubinshteuin1971stefan}. Consequently, numerical simulation is the primary tool for general problems. Conventional numerical approaches can be broadly classified into interface-tracking and interface-capturing methods. Front tracking schemes~\cite{womble1989front, unverdi1992front, juric1996front, marshall1986front} explicitly represent the interface using moving meshes or Lagrangian markers, which conform to the boundary geometry. In contrast, fixed-grid methods resolve the moving boundary implicitly. The enthalpy method~\cite{voller1981accurate, voller1987implicit, date1992novel} reformulates the governing equations by incorporating latent heat into a unified energy term. Phase-field formulations~\cite{mackenzie2002moving} approximate the sharp interface with a smooth transition layer governed by an auxiliary variable, while level-set methods~\cite{osher1988fronts, gibou2005fourth, papac2013level,larios2022error} embed the interface as the zero contour of a higher-dimensional function. For further numerical approaches to Stefan and free boundary problems, we refer
the reader to \cite{chen1997simple, limare2023hybrid, nurnberg2023structure, eto2024rapid, barrett2010stable, escher1998center, mitchell2010application, strain1988linear} and the references
therein.

As an alternative to mesh-based discretizations, mesh-free methods based on neural networks have been increasingly investigated for the numerical solution of partial differential equations.
Such approaches rely on universal approximation properties~\cite{cybenko1989approximation} to represent the solution by parametric functions and determine the unknown parameters through the minimization of residuals associated with the governing equations.
A representative example is the Physics-Informed Neural Network (PINN) framework~\cite{raissi2019physics,fan2026DNO}.
PINN-based formulations for Stefan problems~\cite{li2023improved, madir2025physics, wang2021deep, shkolnikov2024deep} typically introduce the moving interface as an additional unknown variable and optimize it together with the solution.
To improve computational efficiency, Extreme Learning Machine (ELM)-based approaches~\cite{huang2006extreme} have also been adapted to PDE problems.
By fixing the hidden-layer parameters and training only the output weights, Physics-Informed ELM (PIELM) methods~\cite{dwivedi2020physics, dong2021local} transform the resulting optimization problem into a convex linear least-squares formulation for linear PDEs.
More recently, PIELM schemes have been extended to moving boundary problems~\cite{ren2025physics, chang2025physics}, where they have demonstrated improved numerical accuracy in interface tracking and temperature approximation.
Nevertheless, for Stefan-type problems, the coupling between the evolving interface geometry and the temperature solution remains a fundamental difficulty.
When the interface and solution are optimized simultaneously, this nonlinear coupling is reflected in the loss functional, leading to a non-convex optimization problem and limiting the attainable accuracy of both interface reconstruction and solution approximation.

In this work, to address the non-convexity arising in the loss functional associated with coupled interface–solution optimization, we propose an operator splitting method (OSM) formulated within a predictor–corrector framework.
Rather than solving the Stefan problem as a fully coupled system, the proposed approach decomposes the interface–field interaction into an alternating iteration, in which the parameters of both the interface geometry and the temperature solution are updated sequentially.
Specifically, starting from a predicted interface parameterization, the solution is computed by solving linear heat diffusion equations on fixed subdomains determined by the current interface iteration.
Based on the gradients of the solution, the interface parameters are then updated through the Stefan flux condition, yielding an updated set of interface parameters.
This alternating procedure naturally defines a Picard-type fixed-point iteration for the interface parameters.
Under this operator splitting strategy, the solution reconstruction step and the geometry update step are treated as separate subproblems.
Owing to the linear parameterization adopted for both the temperature solution and the interface geometry, each subproblem can be formulated as a linear least-squares problem.
We further show that, with an appropriately chosen relaxation factor 
$\rho$, the resulting fixed-point iteration constitutes a contraction mapping in a neighborhood of the solution, thereby guaranteeing the local convergence of the alternating iteration.
From this perspective, the proposed method transforms the computational task from a non-convex, gradient-based optimization problem induced by interface–solution coupling into a stable fixed-point iteration with alternating convex subproblems.
Numerical experiments on benchmark Stefan problems ranging from one to three spatial dimensions demonstrate the accuracy and robustness of the method, including cases exhibiting Mullins–Sekerka instability~\cite{mullins1964stability, eto2024rapid} and Gibbs–Thomson effects.

The remainder of this paper is organized as follows. 
Section~\ref{sec:problem} outlines the mathematical formulation of the two-phase Stefan problem and the associated boundary conditions and Stefan conditions.
Section~\ref{sec:method} presents the proposed numerical framework, detailing
the predictor--corrector strategy, the parameterization of the solution and
interface, and the operator splitting algorithm.
In Section~\ref{sec:analysis}, we provide a theoretical analysis of the resulting
fixed-point iteration, focusing on its convergence properties and consistency
with the coupled formulation.
Section~\ref{sec:num} reports a series of numerical experiments that demonstrate
the accuracy and robustness of the proposed method.
Finally, Section~\ref{sec:conclusion} concludes the paper with a summary of the
main findings.

\section{Problem Formulation}
\label{sec:problem}
Consider a bounded domain $\Omega \subset \mathbb{R}^d$ with boundary $\partial \Omega$. For any time $t \in (0, T]$, the domain is partitioned into two time-dependent disjoint subdomains $\Omega_1(t)$ and $\Omega_2(t)$ separated by a moving interface $\Gamma(t)$, such that $\Omega = \Omega_1(t) \cup \Omega_2(t) \cup \Gamma(t)$. The interface $\Gamma(t)$ represents the unknown free boundary whose position is determined as part of the solution. Let $u: \Omega \times [0, T] \to \mathbb{R}$ denote the field variable, and let $u_i$ be the restriction of $u$ to $\Omega_i(t)$. The two-phase Stefan problem is governed by the following system of partial differential equations:

\begin{equation}
\label{eq:heat_eq}
\frac{\partial u_i}{\partial t} - \nabla \cdot (k_i \nabla u_i) = f_i(\mathbf{x}, t) \quad \text{in } \Omega_i(t), \quad i=1,2,
\end{equation}
subject to the initial conditions $u(\mathbf{x}, 0) = u_0(\mathbf{x})$ and $\Gamma(0) = \Gamma_0$, and boundary conditions on the fixed boundary $\partial \Omega$. At the free boundary $\Gamma(t)$, the solution must satisfy the continuity condition
\begin{equation}
\label{eq:interface_cont}
u_1(\mathbf{x}, t) = u_2(\mathbf{x}, t) = 0 \quad \text{on } \Gamma(t).
\end{equation}
The evolution of $\Gamma(t)$ is driven by the Stefan condition, which relates the normal velocity $V_n$ to the jump in the normal flux across the interface:
\begin{equation}
\label{eq:stefan_cond}
\beta V_n = \llbracket k \nabla u \rrbracket \cdot \mathbf{n} = \left( k_1 \nabla u_1 - k_2 \nabla u_2 \right) \cdot \mathbf{n} \quad \text{on } \Gamma(t),
\end{equation}
where $\beta$ is a strictly positive coefficient, $k_i$ denotes the conductivity of phase $i$, $\mathbf{n}$ is the unit normal vector on $\Gamma(t)$ pointing from $\Omega_1$ to $\Omega_2$, and $\llbracket \cdot \rrbracket$ denotes the jump operator.
\section{Methodology}
\label{sec:method}
\subsection{Extreme learning machine}
\label{sec:elm_approx}
To discretize and parameterize the coupled system described in Section~\ref{sec:problem}, we employ the ELM framework~\cite{huang2006extreme}, which can be interpreted as a class of Random Feature Methods (RFM)~\cite{rahimi2007random}. 
Within this formulation, both the solution fields and the evolving interface are represented by a single-hidden-layer parametric ansatz with randomized and frozen parameters, while only the linear output coefficients are treated as unknowns. 
The resulting representation is illustrated in~\cref{fig:ELM}.

\begin{figure}[h]
    \centering
    \includegraphics[width=0.95\linewidth]{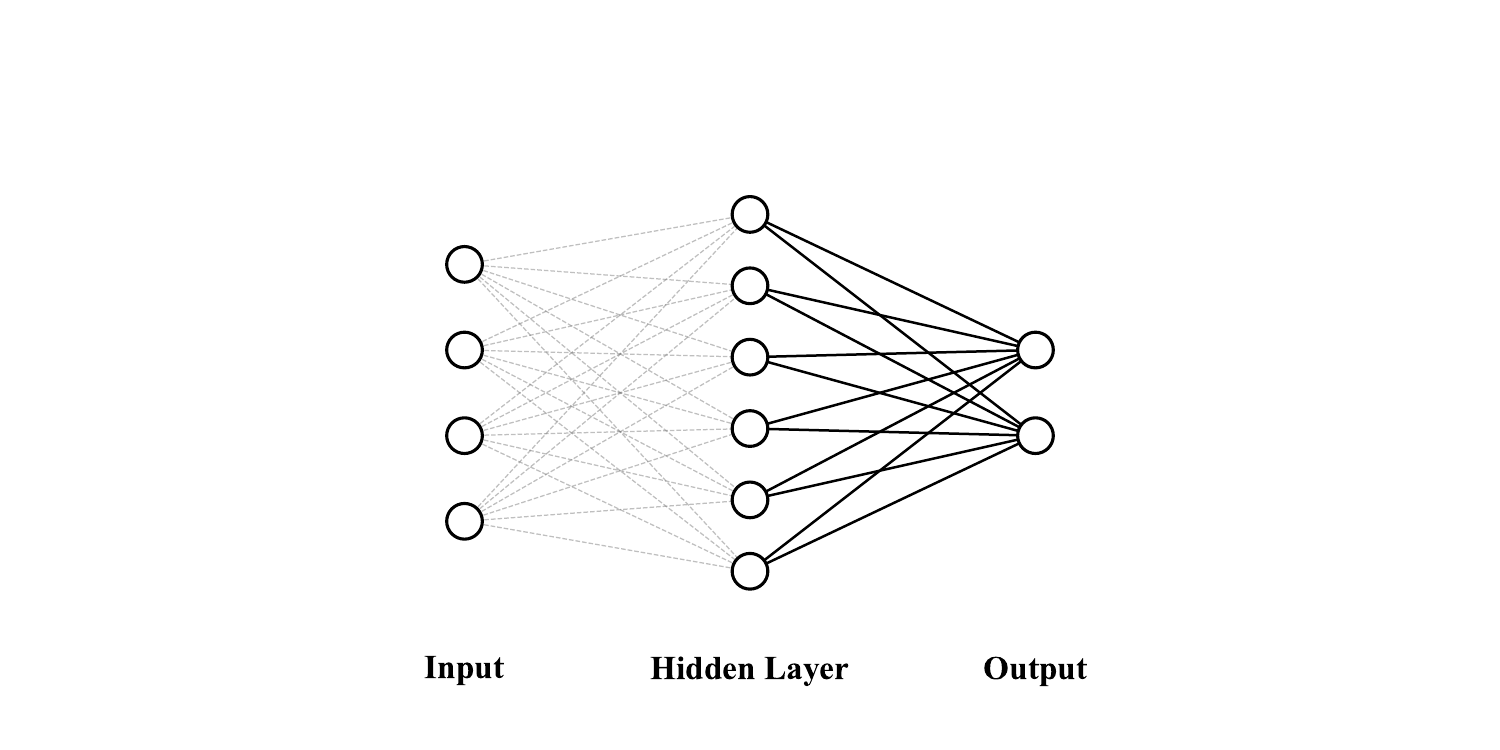}
    \caption{ELM Architecture}
    \label{fig:ELM}
\end{figure}

We approximate the solution $u_i(\mathbf{x}, t)$ in each phase ($i=1, 2$) separately. 
Let $\mathbf{z} = (\mathbf{x}, t) \in \mathbb{R}^{d+1}$ denote the space--time coordinate vector. 
The approximate solution, denoted by $u_i^N$, is constructed as a linear expansion of $N$ basis functions:
\begin{equation}
\label{eq:elm_ansatz}
u_i(\mathbf{x}, t) \approx u_i^N(\mathbf{x}, t; \boldsymbol{\beta}_i)
= \sum_{j=1}^{N} \beta_{i,j} \sigma(\mathbf{w}_{i,j} \cdot \mathbf{z} + b_{i,j})
= \boldsymbol{\Phi}_i(\mathbf{x}, t)\,\boldsymbol{\beta}_i,
\end{equation}
where $\boldsymbol{\beta}_i = [\beta_{i,1}, \dots, \beta_{i,N}]^\top \in \mathbb{R}^N$ denotes the vector of unknown coefficients. 
The parameters $\mathbf{w}_{i,j} \in \mathbb{R}^{d+1}$ and $b_{i,j} \in \mathbb{R}$ are drawn from a fixed probability distribution and remain frozen throughout the computation.

The feature map $\boldsymbol{\Phi}_i(\mathbf{x}, t)
= [\phi_{i,1}(\mathbf{x}, t), \dots, \phi_{i,N}(\mathbf{x}, t)]$
is defined by $\phi_{i,j}(\mathbf{z}) = \sigma(\mathbf{w}_{i,j} \cdot \mathbf{z} + b_{i,j})$. 
In this work, we choose the sine activation function $\sigma(z)=\sin(z)$, which yields a randomized Fourier-type trial space suitable for approximating smooth solutions~\cite{xia2025data}.
With the internal parameters fixed, the action of any linear differential operator $\mathcal{L}$ on $u_i^N$ is linear with respect to the coefficient vector $\boldsymbol{\beta}_i$, i.e.,
\begin{equation}
\mathcal{L}[u_i^N](\mathbf{x}, t)
= \left(\mathcal{L}[\boldsymbol{\Phi}_i](\mathbf{x}, t)\right)\boldsymbol{\beta}_i.
\end{equation}

\subsection{Operator splitting method and alternating iteration}
\label{sec:splitting_algorithm}

To address the coupling and non-linearity associated with the free interface, we propose a framework based on the OSM, as depicted in Figure~\ref{fig:flowchart}. Instead of solving the coupled system monolithically, this strategy algorithmically decomposes the problem into an alternating iterative sequence of two linear subproblems. Crucially, under the proposed ELM framework introduced in Section~\ref{sec:elm_approx}, the update to the parameter of both solution and interface are formulated as Linear Least Squares problems. Consequently, these decoupled steps become convex optimization subproblems. By freezing the interface geometry during the solution solve and subsequently updating the geometry parameter via Stefan condition based on the computed fluxes, we transform the original non-linear optimization task into an alternating predictor-corrector loop. Physically, this method can also be characterized as an iterative thermo-kinematic splitting process.

\begin{figure}[h]
    \centering
    \includegraphics[width=0.95\linewidth]{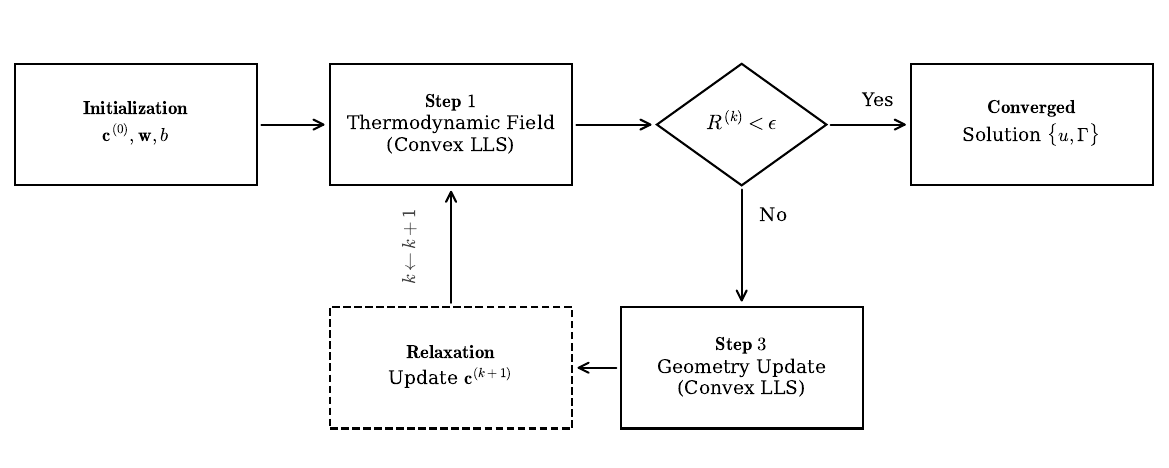}
    \caption{Schematic diagram of the proposed operator splitting framework.}
    \label{fig:flowchart}
\end{figure}

We first parameterize the evolving interface $\Gamma(\cdot, t)$ and the field solution $u_i$ using a linear combination of smooth basis functions. Let the geometry be represented by a parameter vector $\mathbf{c} \in \mathbb{R}^M$ and the solution be represented by $\boldsymbol{\beta}_i,\ i=1,2$, such that the interface location and field solution are defined by functions
\begin{equation*}
    \Gamma(\mathbf{x}, t; \mathbf{c}) = \sum_{m=1}^M c_m \Psi_m(\mathbf{x}, t),\quad u_i(\mathbf{x},t;\boldsymbol{\beta}_i) = \sum_{j=1}^N\beta_{i,j}\Phi_j(\mathbf{x},t),
\end{equation*}
Here, $\{\Psi_m\}_{m=1}^M$ and $\{\Phi_j\}_{j=1}^N$ are sets of randomized basis functions, specifically $\Psi_m(\mathbf{x}, t) = \sigma(\mathbf{w}_m \cdot (\mathbf{x}, t) + b_m)$, where the internal weights $\mathbf{w}_m$ and biases $b_m$ are randomly initialized and frozen before iteration. The problem then reduces to finding the optimal coefficients $\mathbf{c}\in \mathbb{R}^M$ along with the solution weights $\boldsymbol{\beta}_i,\ i=1,2$.

The iterative procedure begins with an initial guess $\Gamma^{(0)}$ determined by an initial coefficient vector $\mathbf{c}^{(0)} = \{c_m^{(0)}\}_{m=1}^M$. For each iteration step $k \ge 1$, we first tackle the field subproblem. With the interface geometry $\Gamma^{(k-1)} = \Gamma(\mathbf{x},t;\mathbf{c}^{(k-1)})$ is fixed, the unknown free boundary is temporarily handled as a known internal boundary. Consequently, the original free boundary problem reduces to a system of coupled linear parabolic initial-boundary value equations defined on the determined space-time subdomains $\Omega_1^{(k-1)}$ and $\Omega_2^{(k-1)}$, which are uniquely partitioned by the frozen interface $\Gamma^{(k-1)}$. We compute the solutions $u_1^{(k)}$ and $u_2^{(k)}$ by minimizing the residuals of the well-posed governing equations:
\begin{align}
    & \frac{\partial u_i^{(k)}}{\partial t} - \nabla \cdot (k_i \nabla u_i^{(k)}) = f_i \quad \text{in } \Omega_i^{(k-1)} \label{eq:field_subproblem1}, \\
    & u_i^{(k)}(\mathbf{x}, 0) = u_{i,0}(\mathbf{x}), \quad \mathcal{B}_i u_i^{(k)} = g_i \text{ on } \partial\Omega, \\
    & u_1^{(k)} = u_2^{(k)} = 0 \quad \text{on } \Gamma^{(k-1)} \label{eq:field_subproblem2}.
\end{align}
Adopting the ELM architecture described in Section~\ref{sec:elm_approx}, we parameterize the solution for each phase $i$ as a linear combination of randomized basis functions: $u_i^{(k)}(\mathbf{x}, t) = \sum_{j=1}^{N} \beta_{i,j}^{(k)} \Phi_j(\mathbf{x}, t)$, where $N$ denotes the number of hidden neurons. Since the basis functions $\Phi_j=\sigma(w_j\cdot(\textbf{x},t) + b_j)\in\{\Phi_j\}_{j=1}^N$ are randomized frozen, minimizing the governing equation residuals constitutes a linear least-squares problem with respect to the unknown weight vector $\boldsymbol{\beta}_i^{(k)}$.

To resolve this subproblem numerically, we employ a collocation strategy driven by Monte Carlo sampling. Specifically, given the geometry delineated by the interface coefficients $\mathbf{c}^{(k-1)}$, we generate four distinct sets of collocation points: interior points $\mathcal{D}_{\Omega, i} = \{(\mathbf{x}_m, t_m)\}_{m=1}^{N_\Omega}$ sampled uniformly within the dynamic space-time subdomains $\Omega_i^{(k-1)}$; initial snapshots $\mathcal{D}_{0, i} = \{(\mathbf{x}_m, 0)\}_{m=1}^{N_0}$ distributed within $\Omega_i(0)$; boundary points $\mathcal{D}_{\partial, i} = \{(\mathbf{x}_m, t_m)\}_{m=1}^{N_\partial}$ located on the fixed exterior boundaries $\partial\Omega_{\text{fixed}} \times (0, T]$; and finally, interface points $\mathcal{D}_{\Gamma} = \{(\mathbf{x}_m, t_m)\}_{m=1}^{N_\Gamma}$ sampled along the trajectory of the frozen interface $\Gamma^{(k-1)}$.

The thermodynamic field weights $\boldsymbol{\beta}_i^{(k)}$ are then determined by minimizing the aggregate squared residuals over these collocation sets. The discrete loss function $\mathcal{L}(\boldsymbol{\beta}_i)$ is defined as:
\begin{equation}
    \label{eq:discrete_loss}
    \begin{aligned}
    \mathcal{L}_{therm}(\boldsymbol{\beta}_i) = & \frac{1}{N_\Omega} \sum_{\mathbf{z} \in \mathcal{D}_{\Omega, i}} \left| \partial_t u_i^N(\mathbf{z}) - \nabla \cdot (\alpha_i \nabla u_i^N(\mathbf{z})) - f_i(\mathbf{z}) \right|^2 \\
    & + \frac{\lambda_0}{N_0} \sum_{\mathbf{z} \in \mathcal{D}_{0, i}} \left| u_i^N(\mathbf{z}) - u_{i,0}(\mathbf{z}) \right|^2 
      + \frac{\lambda_\partial}{N_\partial} \sum_{\mathbf{z} \in \mathcal{D}_{\partial, i}} \left| \mathcal{B}_i u_i^N(\mathbf{z}) - g_i(\mathbf{z}) \right|^2 \\
    & + \frac{\lambda_\Gamma}{N_\Gamma} \sum_{\mathbf{z} \in \mathcal{D}_{\Gamma}} \left| u_i^N(\mathbf{z}) \right|^2,
    \end{aligned}
\end{equation}
where $\lambda_0, \lambda_\partial, \lambda_\Gamma$ are penalty weights balancing the contribution of different terms. Therefore, the solution weights $\boldsymbol{\beta}^{(k)}_i$ are solved using the least squares method
\begin{equation}
\label{eq:lsm_loss_op}
    \boldsymbol{\beta}_i^{(k)} = \operatorname*{argmin}_{\boldsymbol{\beta}} \mathcal{L}_{therm}(\boldsymbol{\beta}_i).
\end{equation}

Crucially, since the differential operators are linear and the network architecture is linear with respect to $\boldsymbol{\beta}_i$, the minimization of Eq.~\eqref{eq:discrete_loss} is strictly equivalent to solving an over-determined linear system via the normal equations:
\begin{equation}
\label{eq:lsm_loss}
    \boldsymbol{\beta}_i^{(k)} = \operatorname*{argmin}_{\boldsymbol{\beta}} \left\| \mathbf{M}_i(\mathbf{c}^{(k-1)}) \boldsymbol{\beta} - \mathbf{y}_i \right\|^2_2 = (\mathbf{M}_i^T \mathbf{M}_i)^{-1} \mathbf{M}_i^T \mathbf{y}_i.
\end{equation}
The global system matrix $\mathbf{M}_i(\mathbf{c})$ and the target vector $\mathbf{y}_i$ are constructed by stacking the discretized physical constraints, exhibiting the following block structure:
\begin{equation}
\label{eq:matrix}
    \mathbf{M}_i(\mathbf{c}) = 
    \begin{bmatrix}
        \mathbf{M}_{\Omega} \\
        \mathbf{M}_{0} \\
        \mathbf{M}_{\partial \Omega} \\
        \mathbf{M}_{\Gamma}
    \end{bmatrix}_i, \quad
    \mathbf{y}_i = 
    \begin{bmatrix}
        \mathbf{f}_i \\
        \mathbf{u}_{i,0} \\
        \mathbf{g}_{i} \\
        \mathbf{0}
    \end{bmatrix}.
\end{equation}
Physically, the constituent blocks enforce distinct constraints on the basis functions. The sub-matrix $\mathbf{M}_{\Omega} \in \mathbb{R}^{N_{\Omega} \times N}$ captures the governing PDE operator evaluated at internal points within $\Omega_i(\mathbf{c})$, while $\mathbf{M}_{0} \in \mathbb{R}^{N_{0} \times N}$ and $\mathbf{M}_{\partial \Omega} \in \mathbb{R}^{N_{\partial} \times N}$ impose the initial condition at $t=0$ and the boundary conditions on $\partial \Omega_{fixed}$, respectively. The final block, $\mathbf{M}_{\Gamma} \in \mathbb{R}^{N_{\Gamma} \times N}$, explicitly enforces the homogeneous condition $u=0$ on the evolving interface $\Gamma(\mathbf{c})$. Given that the total number of collocation points $N_{col} = N_{\Omega} + N_{0} + N_{\partial} + N_{\Gamma}$ is chosen such that $N_{col} \gg N$, the system is reliably over-determined and well-posed in the least-squares sense.

With the updated solutions $u_i^{(k)}$, we proceed to check the consistency of the Stefan condition. Let $V_n^{(k-1)}$ denote the normal velocity implied by the current geometry parameter $\mathbf{c}^{(k-1)}$. We evaluate the residual of the energy balance at the interface:
\begin{equation}
\label{eq:R_flux}
    R^{(k)}_{\text{flux}} =  \beta V_n^{(k-1)} - \llbracket k \nabla u^{(k)} \rrbracket \cdot \mathbf{n}^{(k-1)} .
\end{equation}
If $\|R_{\text{flux}}^{(k)}\|_{L^2(\Gamma)}$ falls below a prescribed tolerance $\epsilon$, the iteration is terminated, and the current state is accepted as the converged solution.

If convergence is not yet achieved, we perform a geometry update step.
To place the geometry update on the same operator-theoretic footing as the
thermodynamic step, we introduce the kinematic residual functional
$\mathcal{L}_{\mathrm{kin}}$ defined by
\begin{equation}
\label{eq:kinematic_loss}
    \mathcal{L}_{\mathrm{kin}}(\mathbf{c}; \boldsymbol{\beta}^{(k)}) 
    = \left\| 
    \beta V_n(\mathbf{c}) 
    - \llbracket k \nabla u^{(k)} \rrbracket \cdot \mathbf{n}^{(k-1)} 
    \right\|_{L^2(\Gamma^{(k-1)})}^2 ,
\end{equation}
where $u^{(k)} = u(\boldsymbol{\beta}^{(k)};\mathbf{x},t)$ is obtained from the
thermodynamic step and $\mathbf{n}^{(k-1)}$ denotes the unit normal vector of the
frozen interface $\Gamma^{(k-1)}$.

We then seek an intermediate coefficient vector, denoted by
$\tilde{\mathbf{c}}^{(k)}$, which defines a geometry kinematically consistent
with the heat flux computed from the field step. Specifically,
$\tilde{\mathbf{c}}^{(k)}$ is obtained as the minimizer of the kinematic
functional,
\begin{equation}
\label{eq:lsm_loss_c}
    \tilde{\mathbf{c}}^{(k)} 
    = \operatorname*{argmin}_{\mathbf{c}} 
    \mathcal{L}_{\mathrm{kin}}(\mathbf{c}; \boldsymbol{\beta}^{(k)}).
\end{equation}

Finally, to stabilize the iteration and ensure the contraction property of the
mapping, we update the interface geometry using a relaxation scheme on the
parameters:
\begin{equation}
    \label{eq:relaxation_update}
    \mathbf{c}^{(k)} = (1 - \rho) \mathbf{c}^{(k-1)} + \rho \tilde{\mathbf{c}}^{(k)},
\end{equation}
where $\rho \in (0, 1]$ is a relaxation parameter. The updated vector
$\mathbf{c}^{(k)}$ defines the geometry $\Gamma^{(k)}$ used for the subsequent
iteration $k+1$.

\begin{algorithm}[h]
\caption{The proposed Operator Splitting Scheme for Two-Phase Stefan Problems}
\label{alg:splitting_solver}
\begin{algorithmic}[1]
\Require 
    Frozen random parameters $\{(\mathbf{w}_{i,j}, b_{i,j})\}_{j=1}^N$ for $i=1,2$; 
    Initial interface coefficients $\mathbf{c}^{(0)}$;
    Relaxation parameter $\rho \in (0, 1]$; 
    Tolerance $\epsilon$.

\State \textbf{Initialization:} Set iteration counter $k \leftarrow 0$.

\While{$k < K_{max}$}
    \State $k \leftarrow k + 1$
    
    \State \textit{Step 1: Thermodynamic Operator (Field Solution Solve)}
    \State Generate collocation points in $\Omega_1^{(k-1)}$ and $\Omega_2^{(k-1)}$ defined by $\mathbf{c}^{(k-1)}$.
    \For{$i = 1, 2$}
        \State Construct linear system matrices $\mathbf{M}_i$ and $\mathbf{y}_i$ based on Eq.~\eqref{eq:matrix}.
        \State Solve via linear least squares for output weights: 
        $$ \boldsymbol{\beta}_i^{(k)} \leftarrow \operatorname{argmin}_{\boldsymbol{\beta}} \| \mathbf{M}_i \boldsymbol{\beta} - \mathbf{y}_i \|_2^2 $$
    \EndFor
    
    \State \textit{Step 2: Convergence Check}
    \State Compute normal velocity $V_n^{(k-1)}$ and flux jump $\mathcal{L}^{(k)} = \llbracket k \nabla u^{(k)} \rrbracket \cdot \mathbf{n}$ on $\Gamma^{(k-1)}$.
    \State Compute residual: $R = \| \beta V_n^{(k-1)} - \mathcal{L}^{(k)} \|_{L^2(\Gamma)}$.
    \If{$R < \epsilon$}
        \State \textbf{break}
    \EndIf
    
    \State \textit{Step 3: Kinematic Operator (Geometry parameters Update)}
    \State Solve via linear least squares for intermediate coefficients $\tilde{\mathbf{c}}$ fitting the Stefan condition:
    $$ \tilde{\mathbf{c}} \leftarrow \operatorname{argmin}_{\mathbf{c}} \| \beta V_n(\mathbf{c}; t) - \mathcal{L}^{(k)} \|_{L^2(\Gamma)}^2 $$
    \State Update interface coefficients with relaxation:
    $$ \mathbf{c}^{(k)} \leftarrow (1 - \rho) \mathbf{c}^{(k-1)} + \rho \tilde{\mathbf{c}} $$
\EndWhile

\State \Return Solution fields $u_1(\cdot; \boldsymbol{\beta}_1^{(k)}), u_2(\cdot; \boldsymbol{\beta}_2^{(k)})$ and interface $\Gamma(\cdot ,t; \mathbf{c}^{(k)})$.
\end{algorithmic}
\end{algorithm}

Abstractly, the iterative process defined in Algorithm~\ref{alg:splitting_solver} can be viewed as a fixed-point iteration on the discrete geometry parameter space $\mathbb{R}^M$. The Thermodynamic Step (Step 1) acts as a thermodynamic operator $\mathcal{T}: \mathbb{R}^M \to \mathbb{R}^{2N}$ that maps the current interface weights $\boldsymbol{c}$ to the optimal field weights $\boldsymbol{\beta}_i$, i.e.,
$$\boldsymbol{\beta}^{(k)} = (\boldsymbol{\beta}_1^{(k)},\boldsymbol{\beta}^{(k)}_2) = \mathcal{T}(\mathbf{c}^{(k-1)}).$$ 
Conversely, the Kinematic Step (Step 3) functions as a kinematic operator $\mathcal{K}: \mathbb{R}^{2N} \to \mathbb{R}^M$, mapping the field information back onto the geometry manifold, i.e., $\tilde{\mathbf{c}}^{(k)} = \mathcal{K}(\boldsymbol{\beta}^{(k)})$.
Therefore, the entire solver is essentially seeking a fixed point for the composite mapping $\mathbf{c} = (\mathcal{K} \circ \mathcal{T}) (\mathbf{c})$. The relaxation scheme in Eq.~\eqref{eq:relaxation_update} serves as a damped update to ensure the contraction property of this composite operator,
\begin{equation}
    \boldsymbol{c}^{(k)} = (1-\rho)\boldsymbol{c}^{(k-1)} + \rho(\mathcal{K}\circ\mathcal{T})\boldsymbol{c}^{(k-1)},
\end{equation}
the local convergence of which will be formally analyzed in Section~\ref{sec:analysis}.
\begin{remark}
\label{rem:convexity}
The accuracy and robustness of the proposed method stem from the structural decomposition of the original coupled problem. In the Stefan problem, the strong coupling between the free boundary $\Gamma$ and the solution $u$ renders the simultaneous optimization a non-convex problem, often plagued by complex loss landscapes and local minima. 

In contrast, our operator splitting strategy decouples the system into two subproblems—the thermodynamic field reconstruction and the kinematic geometry fitting. Crucially, due to the ELM linear framework, both subproblems are formulated as Linear Least Squares problems, which are strictly convex and possess unique global minima. Consequently, the update scheme \eqref{eq:relaxation_update} acts as a relaxed Picard iteration. This effectively transforms the intractable challenge of non-convex optimization into a manageable issue of ensuring the contraction property. As will be demonstrated in the error analysis in Section~\ref{sec:analysis}, an appropriate choice of the relaxation parameter $\rho$ guarantees the convergence of the algorithm within the attraction basin of the true solution.
\end{remark}

\begin{remark}
\label{rem:thermal_resistance}
It is important to note that the convexity of the subproblems is preserved even in more
complex physical settings, such as Stefan problems with interfacial thermal (contact)
resistance, where the temperature field is discontinuous across the moving interface.
In such cases, the classical continuity condition $u_s = u_l$ on $\Gamma(t)$ is replaced
by a flux-dependent jump condition, for instance
$\llbracket u \rrbracket = - R_{th}\, k \nabla u \cdot \mathbf{n}$, which introduces an
explicit temperature discontinuity at the interface.

Within the proposed splitting framework, this modification transforms the field
subproblem into a linear parabolic interface problem.
Crucially, once the interface geometry $\Gamma^{(k-1)}$ is fixed, the resulting interface
transmission conditions remain linear with respect to the ansatz weights
$\boldsymbol{\beta}$.
As a consequence, the thermodynamic sub-step—typically formulated as a linear
least-squares problem—retains a strictly convex structure, without requiring any
modification of the underlying algorithmic architecture.

The numerical experiments presented later in Section~\ref{sec:thermal} demonstrate that this structural convexity
and robustness persist in the presence of interfacial temperature jumps, confirming the
effectiveness of the proposed approach for discontinuous Stefan problems with thermal
resistance.
\end{remark}

\section{Theoretical Analysis}
\label{sec:analysis}

In this section, we provide a theoretical justification for the convergence properties of the proposed operator splitting method. Since the algorithm operates within the discrete parameter space constructed by the Randomized Functional Approximation, our analysis focuses on the behavior of the iterative sequence in $\mathbb{R}^M$. The analysis aims to establish that the fixed point of the proposed algorithmic mapping corresponds to a consistent solution of the semi-discretized Stefan problem, and that the relaxation strategy guarantees local convergence.

\subsection{Local convergence of the relaxed fixed-point iteration}
\label{sec:convergence_analysis}

We begin by formalizing the iterative procedure as a fixed-point problem. Let $\mathcal{T}: \mathbb{R}^M \to \mathbb{R}^{2N}$ and $\mathcal{K}: \mathbb{R}^{2N} \to \mathbb{R}^M$ be the Thermodynamic and Kinematic operators defined in Section~\ref{sec:splitting_algorithm}, respectively. We define the \textit{composite update operator} as $\mathcal{M} :\mathbb{R}^M\to\mathbb{R}^M\coloneqq \mathcal{K} \circ \mathcal{T}$. Thus, finding a solution to the free boundary problem is equivalent to finding a fixed point $\mathbf{c}^* \in \mathbb{R}^M$ such that $\mathcal{M}(\mathbf{c}^*) = \mathbf{c}^*$, after which the field solution parameters are given by  $\boldsymbol{\beta}^* = \mathcal{T}(\mathbf{c}^*)$.

To analyze the stability of the relaxation scheme, we introduce the relaxed operator $\mathcal{G}_\rho: \mathbb{R}^M \to \mathbb{R}^M$, defined as:
\begin{equation}
    \mathcal{G}_\rho(\mathbf{c}) = (1 - \rho)\mathbf{c} + \rho \mathcal{M}(\mathbf{c}).
\end{equation}
The following theorem establishes the local convergence of the sequence $\mathbf{c}^{(k+1)} = \mathcal{G}_\rho(\mathbf{c}^{(k)})$.

\begin{theorem}[Local Convergence via Relaxation]
\label{thm:convergence}
Assume that the composite operator $\mathcal{M}$ is Fréchet differentiable
in an open neighborhood $\mathcal{U} \subset \mathbb{R}^M$ of a fixed point
$\mathbf{c}^*$, and that its Jacobian $\mathbf{J}_{\mathcal{M}}(\mathbf{c}^*)$
has bounded spectral radius.
Then, there exists a relaxation parameter $\rho \in (0, 1]$ and a neighborhood $\mathcal{U}^* \subseteq \mathcal{U}$ containing $\mathbf{c}^*$, such that the relaxed operator $\mathcal{G}_\rho$ is a contraction mapping on $\mathcal{U}^*$. Specifically, for any initial guess $\mathbf{c}^{(0)} \in \mathcal{U}^*$, the sequence implies:
\begin{equation}
    \| \mathbf{c}^{(k+1)} - \mathbf{c}^* \| \le L_\rho \| \mathbf{c}^{(k)} - \mathbf{c}^* \|,
\end{equation}
where $L_\rho < 1$ is the contraction factor. Consequently, the sequence $\{\mathbf{c}^{(k)}\}_{k \ge 0}$ converges linearly to the unique fixed point $\mathbf{c}^*$.
\end{theorem}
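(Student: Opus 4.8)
The plan is to reduce the claim to the classical Ostrowski-type local convergence theorem for fixed-point iterations, exploiting that $\mathcal{G}_\rho$ is an affine combination of the identity and $\mathcal{M}$. First I would differentiate: since $\mathcal{M}$ is Fréchet differentiable on $\mathcal{U}$, so is $\mathcal{G}_\rho$, with
\[
    \mathbf{J}_{\mathcal{G}_\rho}(\mathbf{c}) = (1-\rho)\mathbf{I} + \rho\,\mathbf{J}_{\mathcal{M}}(\mathbf{c}),
\]
and in particular $\mathbf{J}_{\mathcal{G}_\rho}(\mathbf{c}^*) = (1-\rho)\mathbf{I} + \rho\,\mathbf{J}_{\mathcal{M}}(\mathbf{c}^*)$. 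If $\mu_1,\dots,\mu_M$ are the eigenvalues of $\mathbf{J}_{\mathcal{M}}(\mathbf{c}^*)$, then the eigenvalues of $\mathbf{J}_{\mathcal{G}_\rho}(\mathbf{c}^*)$ are exactly $1-\rho(1-\mu_j)$, so controlling the spectral radius of the relaxed Jacobian reduces to a one-dimensional geometry problem in the complex plane.

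Second, I would perform the spectral estimate and fix $\rho$. Writing $\mu_j = a_j + i b_j$, a direct computation gives $|1-\rho(1-\mu_j)|^2 = 1 - 2\rho(1-a_j) + \rho^2\big((1-a_j)^2 + b_j^2\big)$, which is strictly below $1$ precisely when $0 < \rho < 2(1-a_j)/\big((1-a_j)^2 + b_j^2\big)$ --- a nonempty interval as soon as $a_j = \mathrm{Re}(\mu_j) < 1$. Hence taking
\[
    0 < \rho \le \min\!\left\{\,1,\ \min_{1\le j\le M}\frac{2(1-a_j)}{(1-a_j)^2 + b_j^2}\,\right\}
\]
makes $\sigma_\rho$, the spectral radius of $\mathbf{J}_{\mathcal{G}_\rho}(\mathbf{c}^*)$, strictly less than $1$. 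Fixing any $\bar{L}$ with $\sigma_\rho < \bar{L} < 1$, a classical result in matrix analysis provides a vector norm on $\mathbb{R}^M$ whose induced operator norm satisfies $\|\mathbf{J}_{\mathcal{G}_\rho}(\mathbf{c}^*)\| \le \bar{L}$; from here on $\|\cdot\|$ denotes this norm.

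Third, I would localize by linearizing at the fixed point. Fréchet differentiability at $\mathbf{c}^*$ gives $\mathcal{G}_\rho(\mathbf{c}) = \mathbf{c}^* + \mathbf{J}_{\mathcal{G}_\rho}(\mathbf{c}^*)(\mathbf{c}-\mathbf{c}^*) + o(\|\mathbf{c}-\mathbf{c}^*\|)$ as $\mathbf{c}\to\mathbf{c}^*$, where $\mathcal{G}_\rho(\mathbf{c}^*) = \mathbf{c}^*$ because $\mathcal{M}(\mathbf{c}^*) = \mathbf{c}^*$. Choosing $L_\rho \in (\bar{L}, 1)$ and then a radius $r > 0$ small enough that the remainder is bounded by $(L_\rho - \bar{L})\|\mathbf{c}-\mathbf{c}^*\|$ on $\mathcal{U}^* := \{\mathbf{c} : \|\mathbf{c}-\mathbf{c}^*\| < r\} \subseteq \mathcal{U}$, we obtain $\|\mathcal{G}_\rho(\mathbf{c}) - \mathbf{c}^*\| \le L_\rho\|\mathbf{c}-\mathbf{c}^*\|$ for all $\mathbf{c} \in \mathcal{U}^*$. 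This is at once the asserted one-step estimate and the invariance $\mathcal{G}_\rho(\mathcal{U}^*) \subseteq \mathcal{U}^*$, so by induction $\mathbf{c}^{(k)} \in \mathcal{U}^*$ for every $k$ once $\mathbf{c}^{(0)} \in \mathcal{U}^*$, and iterating yields $\|\mathbf{c}^{(k)} - \mathbf{c}^*\| \le L_\rho^{\,k}\|\mathbf{c}^{(0)} - \mathbf{c}^*\| \to 0$, i.e.\ linear convergence; uniqueness of the fixed point in $\mathcal{U}^*$ is immediate from the contraction (Banach's theorem).

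The step I expect to be the real obstacle is the spectral-location requirement buried in the second step: relaxation with $\rho \in (0,1]$ can only pull inside the unit disk those eigenvalues of $\mathbf{J}_{\mathcal{M}}(\mathbf{c}^*)$ already lying in the open half-plane $\mathrm{Re}(\mu) < 1$, so the bare hypothesis that $\mathbf{J}_{\mathcal{M}}(\mathbf{c}^*)$ ``has bounded spectral radius'' must in fact be read as --- or strengthened to --- this half-plane condition. Establishing it rigorously calls for a structural argument rather than abstract fixed-point theory: since $\mathcal{T}$ composes an orthogonal-projection-type least-squares solve with a smooth interface-to-collocation map, and $\mathcal{K}$ likewise composes a least-squares solve with the Stefan flux-jump evaluation, one would want to bound $\|\mathbf{J}_{\mathcal{M}}(\mathbf{c}^*)\|$ --- or at least exclude eigenvalues at or beyond $1$ --- in terms of the conditioning of the collocation matrices $\mathbf{M}_i(\mathbf{c}^*)$ and the Lipschitz sensitivity of the flux jump to interface perturbations. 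That sensitivity estimate, not the contraction machinery, is the delicate part, and it is also what ultimately quantifies the admissible range of $\rho$ and the radius $r$ of the attraction basin.
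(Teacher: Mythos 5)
Your proposal is correct and follows essentially the same route as the paper: linearize $\mathcal{G}_\rho$ at the fixed point and drive the eigenvalues $1-\rho+\rho\lambda_i$ of the relaxed Jacobian strictly inside the unit disk, then convert spectral radius $<1$ into a local contraction (your adapted-norm and remainder-control steps are the details the paper leaves implicit). The obstacle you flag is genuine and is in fact a gap in the paper's own argument: the paper claims that a bounded spectrum alone allows a sufficiently small $\rho$, whereas your explicit computation of $|1-\rho(1-\lambda_i)|$ shows this works only when every eigenvalue satisfies $\mathrm{Re}(\lambda_i)<1$, a spectral-location condition that the hypothesis of the theorem should state (or that must be verified structurally for $\mathcal{K}\circ\mathcal{T}$, as you suggest).
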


\begin{proof}
First, we analyze the smoothness of the thermodynamic operator $\mathcal{T}$. Recall that for a given geometry $\mathbf{c}$, the field weights $\boldsymbol{\beta} = \mathcal{T}(\mathbf{c})$ are obtained by solving the linear least squares problem~\cref{eq:lsm_loss}:
\begin{equation}
    \boldsymbol{\beta} = (\mathbf{M}(\mathbf{c})^T \mathbf{M}(\mathbf{c}))^{-1} \mathbf{M}(\mathbf{c})^T \mathbf{y} \coloneqq \mathbf{M}(\mathbf{c})^\dagger \mathbf{y}.
\end{equation}
where $\mathbf M(\mathbf c)$ is the global collocation matrix constructed from
the basis functions and their derivatives. Under the standard overparameterized
setting $N_{\mathrm{col}} \gg N_{\mathrm{basis}}$, together with the almost sure
linear independence of the randomized basis functions, the matrix
$\mathbf M(\mathbf c)$ has full column rank in a neighborhood of $\mathbf c^*$.
Moreover, since the basis functions are smooth, the entries of
$\mathbf M(\mathbf c)$ depend smoothly on $\mathbf c$. Consequently, the
Moore--Penrose pseudoinverse $\mathbf M(\mathbf c)^\dagger$ is well defined and
uniformly bounded, and the mapping $\boldsymbol{\beta} = \mathcal T(\mathbf c)$
is continuously differentiable in a neighborhood of $\mathbf c^*$.

Next, we consider the kinematic operator $\mathcal K$, which updates the
interface geometry based on the computed thermal flux. The residual of Stefan flux~\cref{eq:R_flux} depends linearly
on the field coefficients $\boldsymbol{\beta}$. The geometry update is obtained
by~\cref{eq:lsm_loss_c}. Since this step is a linear
projection, the kinematic operator $\mathcal K$ is continuously differentiable
with respect to $\boldsymbol{\beta}$.

Combining the above arguments, both the thermodynamic operator $\mathcal T$ and
the kinematic operator $\mathcal K$ are continuously differentiable in a
neighborhood of the fixed point $\mathbf c^*$. Consequently, their composition
$\mathcal M = \mathcal K \circ \mathcal T$ is continuously differentiable in a
neighborhood of $\mathbf c^*$.

Let $\mathbf J_{\mathcal M} = \nabla_{\mathbf c}\mathcal M(\mathbf c^*)$ denote
the Jacobian of the composite update operator at the fixed point. For the
relaxed fixed-point iteration
\[
\mathbf c^{(k+1)} = (1-\rho)\mathbf c^{(k)} + \rho\,\mathcal M(\mathbf c^{(k)}),
\]
a first-order Taylor expansion of $\mathcal M$ around $\mathbf c^*$ yields the
linearized error propagation
\begin{equation}
\mathbf c^{(k+1)} - \mathbf c^*
= \bigl[(1-\rho)\mathbf I + \rho\,\mathbf J_{\mathcal M}\bigr]
(\mathbf c^{(k)} - \mathbf c^*) +  O\!\left(
\|\mathbf c^{(k)} - \mathbf c^*\|^2\right).
\end{equation}

Let $\{\lambda_i\}$ denote the eigenvalues of $\mathbf J_{\mathcal M}$. The
corresponding eigenvalues of the iteration matrix
$\nabla\mathcal G_\rho = (1-\rho)\mathbf I + \rho\,\mathbf J_{\mathcal M}$ are
$\mu_i(\rho) = 1-\rho + \rho\,\lambda_i$. Local linear convergence of the
iteration requires the spectral radius condition
\[
\max_i |\mu_i(\rho)| < 1.
\]
Since $\mathbf J_{\mathcal M}$ has bounded spectrum, there exists a sufficiently
small relaxation parameter $\rho>0$ such that all eigenvalues
$\mu_i(\rho)$ lie strictly within the unit disk. Hence, the relaxed operator
$\mathcal G_\rho$ defines a contraction mapping in a neighborhood of
$\mathbf c^*$, and the fixed-point iteration converges locally and linearly to
$\mathbf c^*$.
\end{proof}

\subsection{Consistency with the Monolithic Formulation}
\label{sec:consistency}

To rigorously situate the proposed framework within the broader context of neural PDE solvers, we analyze the relationship between the fixed point of OSM and the solution obtained via monolithic optimization strategies, such as standard PINNs~\cite{wang2021deep} or coupled PIELM approaches~\cite{ren2025physics, chang2025physics}.

In monolithic approaches, the geometric parameters $\mathbf{c}$ defining the free interface and the weight parameters $\boldsymbol{\beta}$ approximating the solution are optimized simultaneously. The objective is to find the global minimizer of a composite loss function $\mathcal{L}_{global}$, which aggregates the residuals from the governing equations and boundary conditions:
\begin{equation}
\label{eq:monolithic_loss}
    \mathcal{L}_{global}(\mathbf{c}, \boldsymbol{\beta}) = \lambda_{pde}\| \mathcal{R}_{pde}(\mathbf{c}, \boldsymbol{\beta}) \|^2 + \lambda_{bc}\| \mathcal{R}_{bc}(\boldsymbol{\beta}) \|^2 + \lambda_{ini}\| \mathcal{R}_{ini}(\mathbf{c}, \boldsymbol{\beta}) \|^2 + \lambda_{\Gamma}\| \mathcal{R}_{\Gamma}(\mathbf{c}, \boldsymbol{\beta}) \|^2 + \lambda_{stefan} \| \mathcal{R}_{stefan}(\mathbf{c}, \boldsymbol{\beta}) \|^2.
\end{equation}
Here, $\mathcal{R}_{pde}$, $\mathcal{R}_{bc}$, and $\mathcal{R}_{ini}$ represent the residuals for the equation, boundary conditions, and initial conditions, respectively. The term $\mathcal{R}_{\Gamma}$ enforces the Dirichlet continuity condition ($u=0$) on the evolving free interface, while $\mathcal{R}_{stefan}$ enforces the flux Stefan condition. The simultaneous optimization seeks the pair $(\mathbf{c}^*, \boldsymbol{\beta}^*)$ that minimizes this global functional. However, due to the nonlinear coupling between the domain geometry $\mathbf{c}$ and the field solution $\boldsymbol{\beta}$, the optimization landscape of $\mathcal{L}_{global}$ is inherently non-convex, posing significant challenges for gradient-based solvers in locating the global minimum.

The proposed operator splitting method essentially reformulates this non-convex global minimization into a sequence of convex subproblems effectively decoupling the objective function into a thermodynamic part $\mathcal{L}_{therm}$ and a kinematic part $\mathcal{L}_{kin}$ on each iteration $k$:
\begin{align}
    \mathcal{L}_{therm}(\mathbf{c}^{(k)}, \boldsymbol{\beta}) &= \lambda_{pde}\| \mathcal{R}_{pde} \|^2_{L^2(\Omega_i^{(k)}(t))} + \lambda_{bc}\| \mathcal{R}_{bc} \|^2_{L^2(\partial\Omega)} + \lambda_{ini}\| \mathcal{R}_{ini} \|^2_{L^2(\Omega_i^{(k)}(0))} + \lambda_{\Gamma}\| \mathcal{R}_{\Gamma} \|^2_{L^2(\Gamma^{(k)})}, \\
    \mathcal{L}_{kin}(\mathbf{c}, \boldsymbol{\beta}^{(k)}) &= \lambda_{stefan} \| \mathcal{R}_{stefan}(\boldsymbol{\beta}^{(k)},\mathbf{c}) \|^2_{L^2(\Gamma^{(k)})}.
\end{align}
In the Thermodynamic Step of our algorithm, for a fixed interface geometry $\mathbf{c}^{(k)}$, the solver determines the field weights $\boldsymbol{\beta}^{(k)}$ by minimizing $\mathcal{L}_{therm}(\mathbf{c}^{(k)}, \boldsymbol{\beta})$. Crucially, under the proposed ELM framework, this step constitutes a linear least-squares problem. Consequently, the resulting solution $\boldsymbol{\beta}^{(k)}$ is not merely a local approximation but the unique global minimizer of the thermodynamic residual functional on the discretization subspace defined by the current geometry ansatz.

Subsequently, the Kinematic Step updates the geometry parameter $\mathbf{c}$ to align with the Stefan condition driven by the computed field, which is equivalent to minimizing the kinematic residual $\mathcal{L}_{kin}$ with respect to $\mathbf{c}$. When the iterative scheme converges to a fixed point $\mathbf{c}^*$ (where $\mathbf{c}^* = \mathcal{M}(\mathbf{c}^*)$) with the associated field weights $\boldsymbol{\beta}^* = \mathcal{T}(\mathbf{c}^*)$, it implies that the system fulfills two conditions simultaneously: the temperature field is the optimal resolution for the frozen domain geometry, and the domain geometry is stationary with respect to the thermal fluxes. Therefore, the fixed point pair $(\mathbf{c}^*, \boldsymbol{\beta}^*)$ simultaneously minimizes both $\mathcal{L}_{therm}$ and $\mathcal{L}_{kin}$. This confirms that the fixed point $(\mathbf{c}^*, \boldsymbol{\beta}^*)$
corresponds to a stationary point of the monolithic residual functional
restricted to the chosen approximation spaces. Thus, The operator splitting strategy avoids direct optimize of the coupled
non-convex loss landscape by replacing it with a sequence of convex
subproblems connected through a relaxed fixed-point iteration.

\section{Numerical experiments}
\label{sec:num}
In this section, we present a series of numerical experiments to assess the accuracy and robustness of the proposed operator splitting method.
Unless otherwise stated, all numerical errors are evaluated using relative $L^2$ norms computed on a set of testing points.

Let $\{(\boldsymbol{x}_m,t_m)\}_{m=1}^{N_u}$ denote the testing points in the space--time domain, and let
$u_{\mathrm{pred}}(\boldsymbol{x},t)$ and $u_{\mathrm{exact}}(\boldsymbol{x},t)$ be the numerical and exact solutions, respectively.
The relative $L^2$ error of the temperature field is defined as
\begin{equation}
\label{eq:L2_error_solution}
\|e_u\|
=
\left(
\frac{
\sum_{m=1}^{N_u} \left| u_{\mathrm{pred}}(\boldsymbol{x}_m,t_m) - u_{\mathrm{exact}}(\boldsymbol{x}_m,t_m) \right|^2
}{
\sum_{m=1}^{N_u} \left| u_{\mathrm{exact}}(\boldsymbol{x}_m,t_m) \right|^2
}
\right)^{1/2}.
\end{equation}

Similarly, let $\{t_n\}_{n=1}^{N_\Gamma}$ be a set of testing points in time.
Denoting by $\Gamma_{\mathrm{pred}}(t)$ and $\Gamma_{\mathrm{exact}}(t)$ the predicted and exact free boundary positions, respectively, the relative $L^2$ error of the free boundary is defined as
\begin{equation}
\label{eq:L2_error_interface}
\|e_\Gamma\|
=
\left(
\frac{
\sum_{n=1}^{N_\Gamma} \left| \Gamma_{\mathrm{pred}}(t_n) - \Gamma_{\mathrm{exact}}(t_n) \right|^2
}{
\sum_{n=1}^{N_\Gamma} \left| \Gamma_{\mathrm{exact}}(t_n) \right|^2
}
\right)^{1/2}.
\end{equation}

\subsection{One-dimensional one-phase Stefan problem}
\label{sec:1D1phase}
We begin with a benchmark one-dimensional one-phase Stefan problem~\cite{furzeland1980comparative, wang2021deep,ren2025physics}, which serves as a basic validation case for the proposed operator splitting framework. 
The problem is posed on a time-dependent spatial domain
\[
\Omega(t) = \{\, x \mid 0 \le x \le \Gamma(t) \,\}, \qquad t \in (0,1],
\]
where the free boundary $\Gamma(t)$ is unknown and determined as part of the solution.

The governing equation, together with the initial and boundary conditions, is given by
\begin{align}
\label{eq:stefan_pde_1d}
    & \frac{\partial u}{\partial t} - \frac{\partial^2 u}{\partial x^2} = 0, 
    \quad 0 < x < \Gamma(t), \quad t \in (0,1], \\
    & u(x,0) = \frac{x^2}{2} + 2x + \frac{1}{2}, 
    \quad 0 \le x \le \Gamma(0), \\
    & \frac{\partial u}{\partial x}(0,t) = 2, 
    \quad t \in (0,1].
\end{align}
At the moving interface $\Gamma(t)$, the temperature satisfies a homogeneous Dirichlet condition, and the interface evolution is prescribed through the Stefan condition,
\begin{equation}
\label{eq:stefan_1d_interface}
    \Gamma(0) = 2\sqrt{3}, 
    \qquad u(\Gamma(t),t) = 0, 
    \qquad \frac{\partial u}{\partial x}(\Gamma(t),t) = \sqrt{3+2t}.
\end{equation}

For this configuration, an exact analytical solution is available and is given by
\[
u_{\mathrm{exact}}(x,t) = \frac{x^2}{2} + 2x + \frac{1}{2} + t,
\qquad
\Gamma_{\mathrm{exact}}(t) = 2\sqrt{3+2t}.
\]

To assess the numerical performance, we compute the absolute errors of both the solution and the free boundary position. 
Figure~\ref{fig:boundary_error_exp1} shows the absolute error of the free boundary location 
over the time interval $t \in (0,1]$. The figure reports the time evolution of $e_\Gamma(t)$ evaluated from the predicted and exact interface positions. The maximum error magnitude observed in the numerical results is approximately $O(10^{-14})$.
Figure~\ref{fig:solution_error_exp1} presents the pointwise absolute error
in the space--time domain $(x,t) \in \Omega(t) \times (0,1]$. The pointwise absolute error in $\Omega(t)$ also remains at the level of $10^{-14}$.

\begin{table}[h]
  \centering
  \caption{Comparison of errors for different methods (PINN~\cite{wang2021deep}, PIELM~\cite{ren2025physics}, and OSM).}
  \label{tab:error_comparison}
  \begin{tabular}{lcccccc}
    \toprule
    \multirow{2}{*}{Case} & \multicolumn{2}{c}{PINN} & \multicolumn{2}{c}{PIELM} & \multicolumn{2}{c}{OSM} \\
    \cmidrule(lr){2-3} \cmidrule(lr){4-5} \cmidrule(lr){6-7}
     & $u$ & $s$ & $u$ & $s$ & $u$ & $s$ \\
    \midrule
    Section~\ref{sec:1D1phase} & 3.92e-04 & 1.04e-03 & 3.44e-08 & 7.39e-07 & 2.80e-14 & 3.27e-14 \\
    Section~\ref{sec:1D2phase} Case 1 & 7.34e-04 & 3.52e-04 & 1.04e-07 & 4.26e-08 & 1.52e-14      & 3.16e-15      \\
    Section~\ref{sec:1D2phase} Case 2 & 3.39e-04 & 4.14e-05 & 3.19e-06 & 8.15e-07 & 4.79e-13      & 3.12e-14    \\
    Section~\ref{sec:2D2phase} & 3.70e-03 & 3.70e-03 & 9.40e-06 & 2.78e-06 & 4.57e-13      & 7.18e-14      \\
    \bottomrule
  \end{tabular}
\end{table}

\begin{figure}[h]
    \centering
    % First Subfigure: Boundary Error
    \begin{subfigure}[b]{0.48\textwidth}
        \centering
        \includegraphics[width=\textwidth]{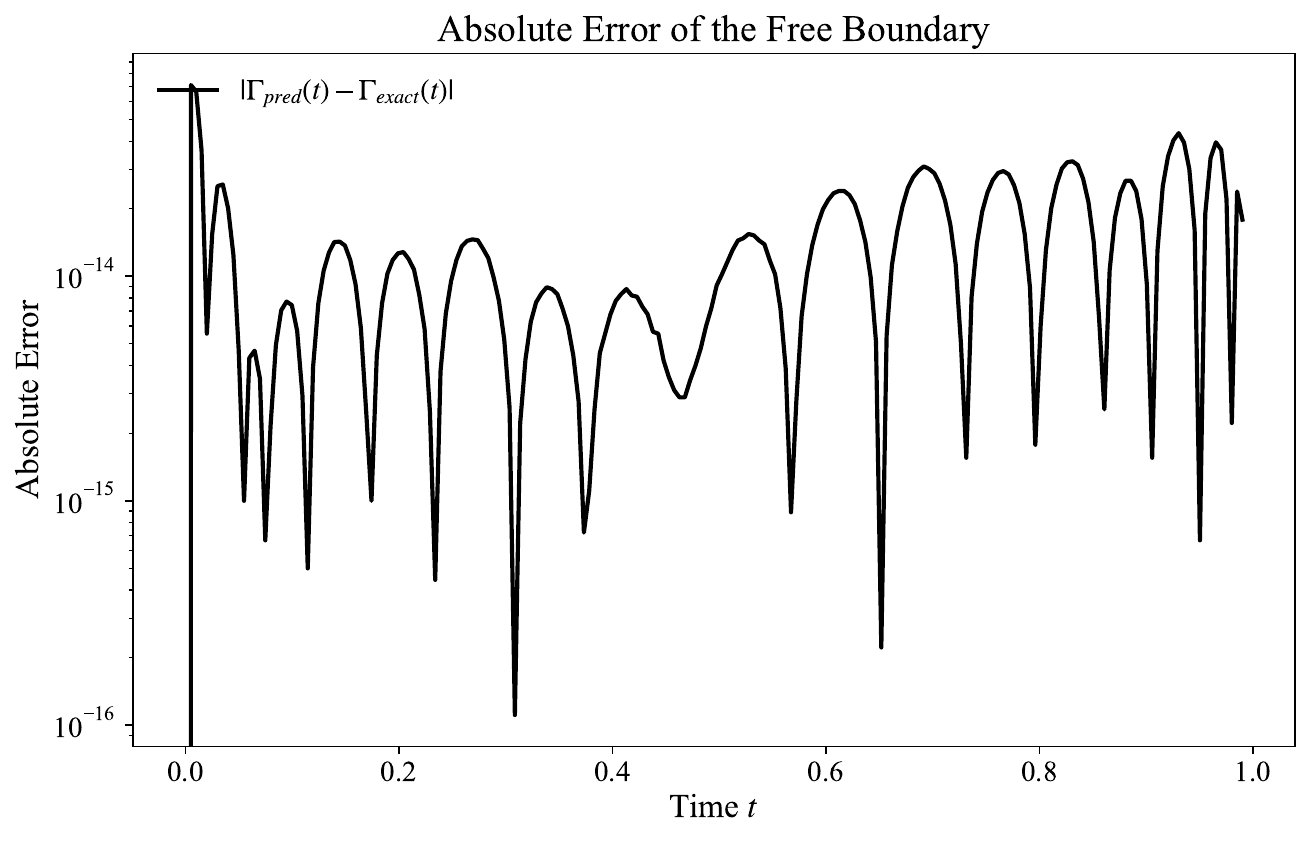}
        \caption{}
        \label{fig:boundary_error_exp1}
    \end{subfigure}
    \hfill
    % Second Subfigure: Solution Error Heatmap
    \begin{subfigure}[b]{0.48\textwidth}
        \centering
        \includegraphics[width=\textwidth]{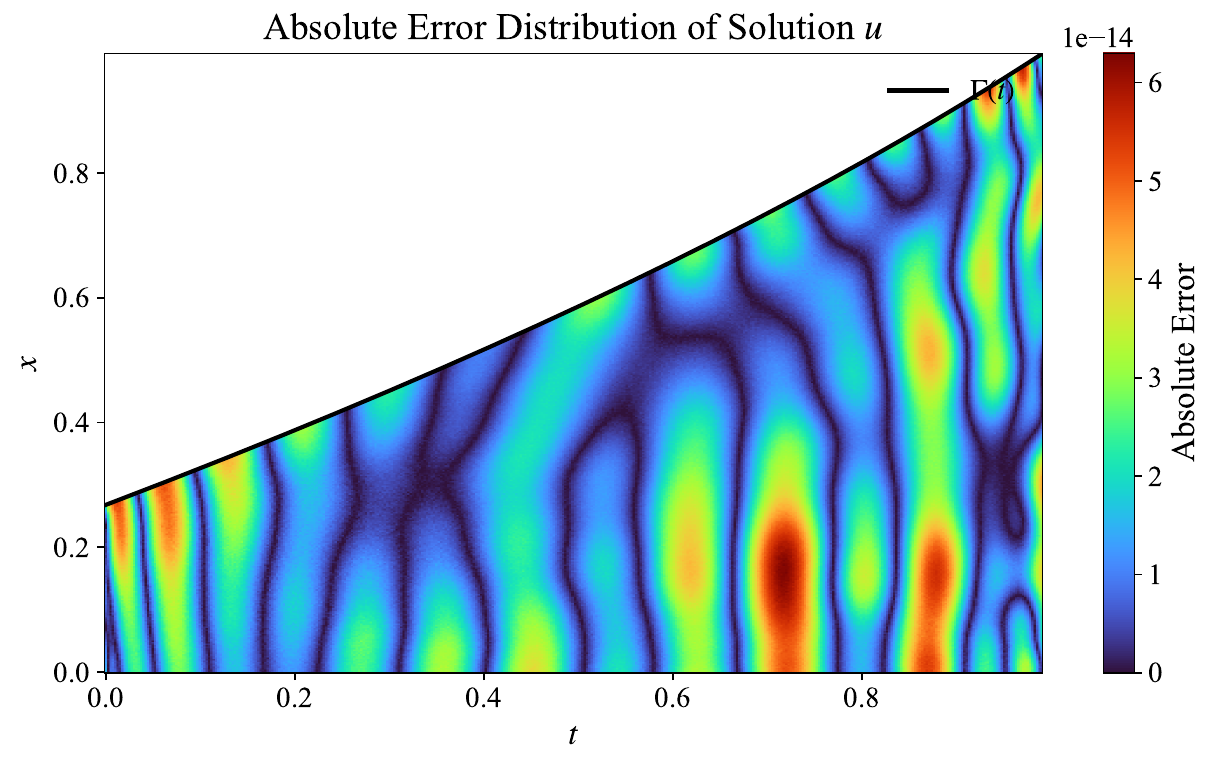}
        \caption{}
        \label{fig:solution_error_exp1}
    \end{subfigure}
    \caption{Numerical results for the 1D one-phase Stefan problem. (a) Evolution of the absolute error in the free boundary location $\Gamma(t)$. (b) Heatmap of the pointwise absolute error $|e_u|$ in the space-time domain.}
    \label{fig:case1_results}
\end{figure}

\subsection{One-dimensional two-phase Stefan problem}
\label{sec:1D2phase}
We next consider one-dimensional two-phase Stefan problems~\cite{furzeland1980comparative,johansson2013meshless, ren2025physics} to further examine the robustness of the proposed operator splitting framework.
Compared with the one-phase setting, the two-phase formulation involves two solution fields defined on time-dependent subdomains separated by a moving interface, and requires the accurate evaluation of heat fluxes from both sides of the free boundary.

\subsubsection*{Case 1.}
The problem is posed on the fixed time--space domain $(x,t)\in[0,2]\times[0,1]$, which is partitioned by a moving interface $\Gamma(t)$ into
\[
\Omega_1(t) = \{\, 0 \le x \le \Gamma(t) \,\}, 
\qquad
\Omega_2(t) = \{\, \Gamma(t) \le x \le 2 \,\}.
\]
The governing equations for the two phases are given by
\begin{align}
    & \frac{\partial u_1}{\partial t} - k_1 \frac{\partial^2 u_1}{\partial x^2} = 0,
    \quad 0 < x < \Gamma(t), \quad t \in (0,1], \\
    & \frac{\partial u_2}{\partial t} - k_2 \frac{\partial^2 u_2}{\partial x^2} = 0,
    \quad \Gamma(t) < x < 2, \quad t \in (0,1],
\end{align}
where $k_1 = 2$ and $k_2 = 1$. The initial and boundary conditions are prescribed as
\begin{align}
    & u_1(x,0) = 2 e^{\frac{2x-1}{4}} - 2,
    \quad
    u_2(x,0) = e^{\frac{x-1}{2}} - 1,
    \quad
    \Gamma(0) = 0.5, \\
    & u_1(0,t) = 2 e^{\frac{2t-1}{4}} - 2,
    \quad
    u_2(2,t) = e^{\frac{2t-3}{2}} - 1.
\end{align}
At the moving interface $x=\Gamma(t)$, the temperature continuity and Stefan condition are imposed:
\begin{equation}
\label{eq:1d2phase_case1_interface}
    u_1(\Gamma(t),t) = u_2(\Gamma(t),t) = 0,
    \qquad
    \frac{\mathrm{d}\Gamma}{\mathrm{d}t}
    = \llbracket k \frac{\mathrm{d}u}{\mathrm{d}x} \rrbracket.
\end{equation}
For this benchmark, the exact solution is given by
\[
u_1(x,t) = 2 e^{\frac{2t-2x+1}{4}} - 2,
\qquad
u_2(x,t) = e^{\frac{2t-2x+1}{2}} - 1,
\qquad
\Gamma_{\mathrm{exact}}(t) = t + 0.5.
\]

The numerical performance for this two-phase scenario is illustrated in Figure~\ref{fig:case3_results}. 
Figure~\ref{fig:case3_results}(a) shows the absolute error of the free boundary position
 as a function of time.
It is observed that the interface error remains at the level of $10^{-14}$ throughout the entire time interval.
Figure~\ref{fig:case3_results}(b) displays the spatiotemporal distribution of the pointwise absolute error in both phases.
The error magnitude is uniformly on the order of $10^{-14}$ over the space--time domain, including the vicinity of the moving interface.

\begin{figure}[h]
    \centering
    % First Subfigure: Two-Phase Boundary Error
    \begin{subfigure}[b]{0.48\textwidth}
        \centering
        \includegraphics[width=\textwidth]{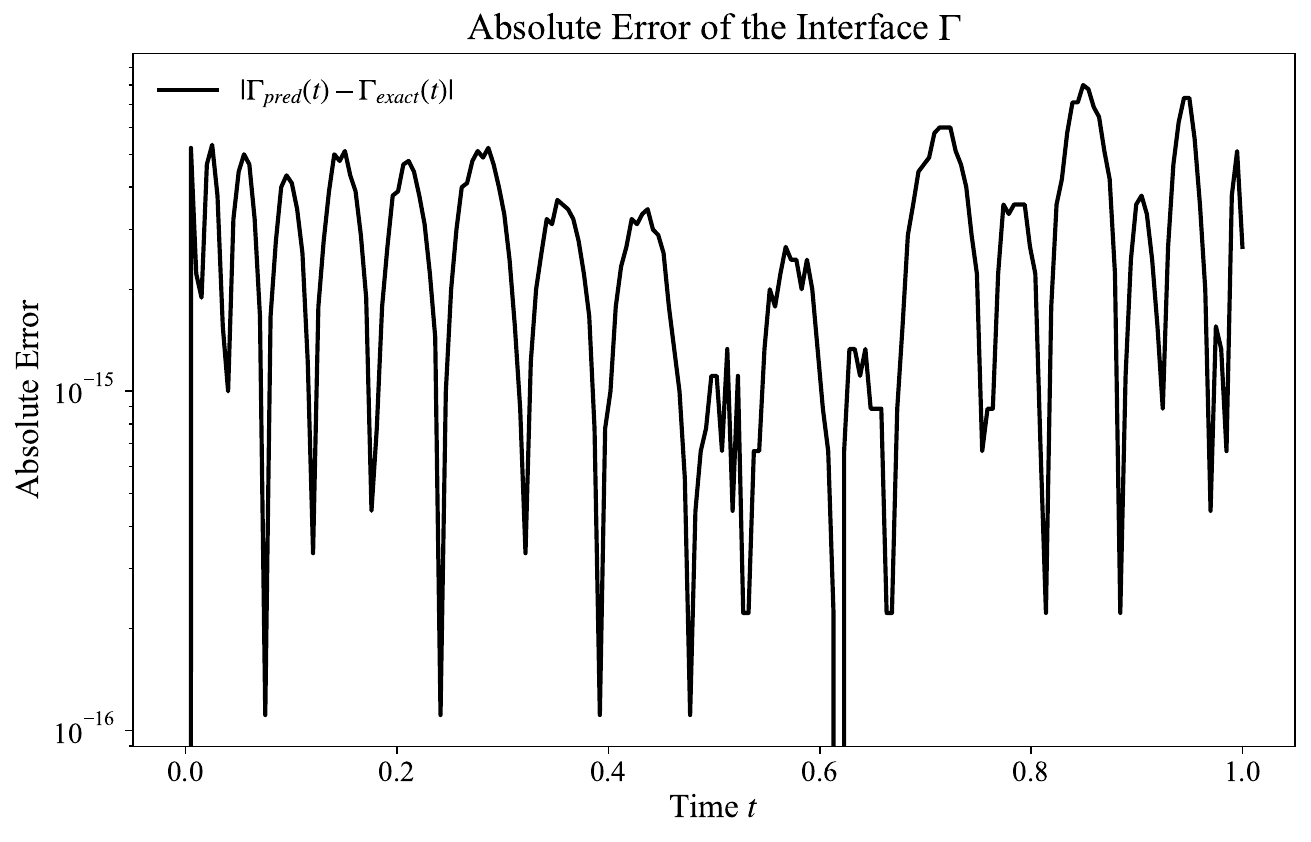}
        \caption{}
        \label{fig:2phase_boundary_error}
    \end{subfigure}
    \hfill
    % Second Subfigure: Two-Phase Solution Error Heatmap
    \begin{subfigure}[b]{0.48\textwidth}
        \centering
        \includegraphics[width=\textwidth]{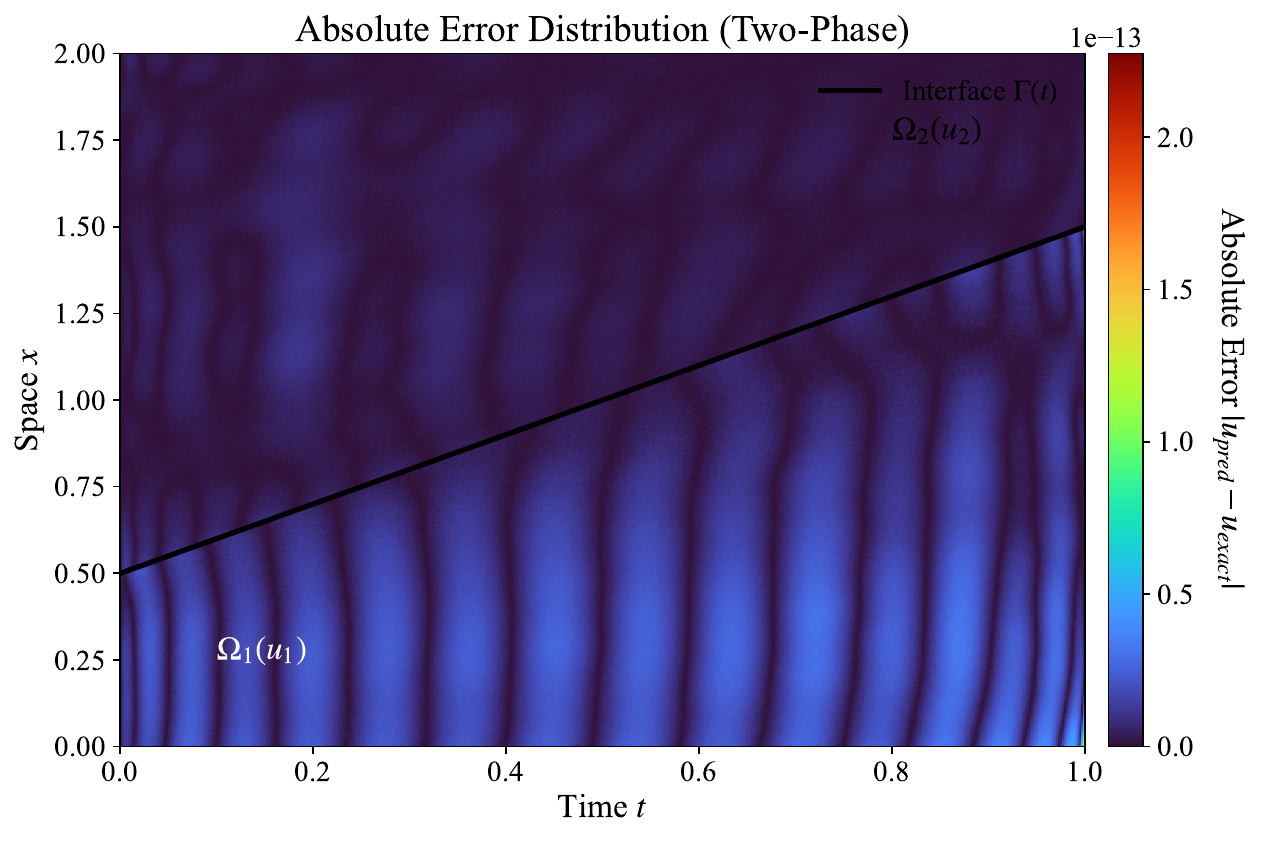}
        \caption{}
        \label{fig:2phase_heatmap}
    \end{subfigure}
    \caption{Numerical results for the one-dimensional two-phase Stefan problem (Case~1).
(a) Absolute error of the predicted free boundary position $\Gamma(t)$ versus time.
(b) Spatiotemporal distribution of the pointwise absolute error
$|u_{\mathrm{pred}}(x,t) - u_{\mathrm{exact}}(x,t)|$.
The dashed line indicates the exact interface trajectory.}
    \label{fig:case3_results}
\end{figure}

These results indicate that, for this benchmark, the temperature field and the free boundary
are resolved to comparable numerical precision.

\subsubsection*{Case 2.}
We next consider a two-phase Stefan problem defined on the domain $[0,1.5]$ over the time interval $t \in [0,1]$.
The moving interface $\Gamma(t)$ separates the two phases as
\[
\Omega_1(t) = \{\, 0 \le x \le \Gamma(t) \,\},
\qquad
\Omega_2(t) = \{\, \Gamma(t) \le x \le 1.5 \,\}.
\]
The governing equations are
\begin{align}
    & \frac{\partial u_1}{\partial t} - k_1\frac{\partial^2 u_1}{\partial x^2} = 0,
    \quad 0 \le x \le \Gamma(t), \quad t \in (0,1], \\
    & \frac{\partial u_2}{\partial t} - k_2 \frac{\partial^2 u_2}{\partial x^2} = 0,
    \quad \Gamma(t) \le x \le 1.5, \quad t \in (0,1],
\end{align}
where $k_1 = 1$ and $k_2 = 1$.The initial and boundary conditions are specified as
\begin{align}
    & u_1(x,0) = 1 - \frac{1}{\operatorname{erf}(\alpha)}
    \operatorname{erf}\!\left( \frac{x}{\sqrt{2}} \right),
    \quad
    u_2(x,0) = -1 + \frac{1}{\operatorname{erfc}(\alpha/\sqrt{2})}
    \operatorname{erfc}\!\left( \frac{x}{2} \right), \\
    & \Gamma(0) = \alpha \sqrt{2}, \\
    & u_1(0,t) = 1,
    \quad
    u_2(1.5,t) = -1 + \frac{1}{\operatorname{erfc}(\alpha/\sqrt{2})}
    \operatorname{erfc}\!\left( \frac{1.5}{2\sqrt{2t+1}} \right).
\end{align}
At the interface $x=\Gamma(t)$, the continuity and Stefan conditions read
\begin{equation}
\label{eq:1d2phase_case2_interface}
    u_1(\Gamma(t),t) = u_2(\Gamma(t),t) = 0,
    \qquad
    \frac{\mathrm{d}\Gamma}{\mathrm{d}t}
    = -\frac{\partial u_1}{\partial x}
    + \frac{\partial u_2}{\partial x}.
\end{equation}

The parameter $\alpha$ is determined by the transcendental equation
\begin{equation}
    \alpha \sqrt{\pi}
    - \frac{e^{-\alpha^2}}{\operatorname{erf}(\alpha)}
    + \frac{1}{\sqrt{2}} \frac{e^{-\alpha^2/2}}{\operatorname{erfc}(\alpha/\sqrt{2})}
    = 0.
\end{equation}
With the resulting value of $\alpha$, the exact solutions are given by
\begin{align}
    & u_1(x,t) =
    1 - \frac{1}{\operatorname{erf}(\alpha)}
    \operatorname{erf}\!\left( \frac{x}{\sqrt{4t+2}} \right), \\
    & u_2(x,t) =
    -1 + \frac{1}{\operatorname{erfc}(\alpha/\sqrt{2})}
    \operatorname{erfc}\!\left( \frac{x}{2\sqrt{2t+1}} \right), \\
    & \Gamma_{\mathrm{exact}}(t) = \alpha \sqrt{4t+2}.
\end{align}

\begin{figure}[h]
    \centering
    % First Subfigure: Two-Phase Boundary Error
    \begin{subfigure}[b]{0.48\textwidth}
        \centering
        \includegraphics[width=\textwidth]{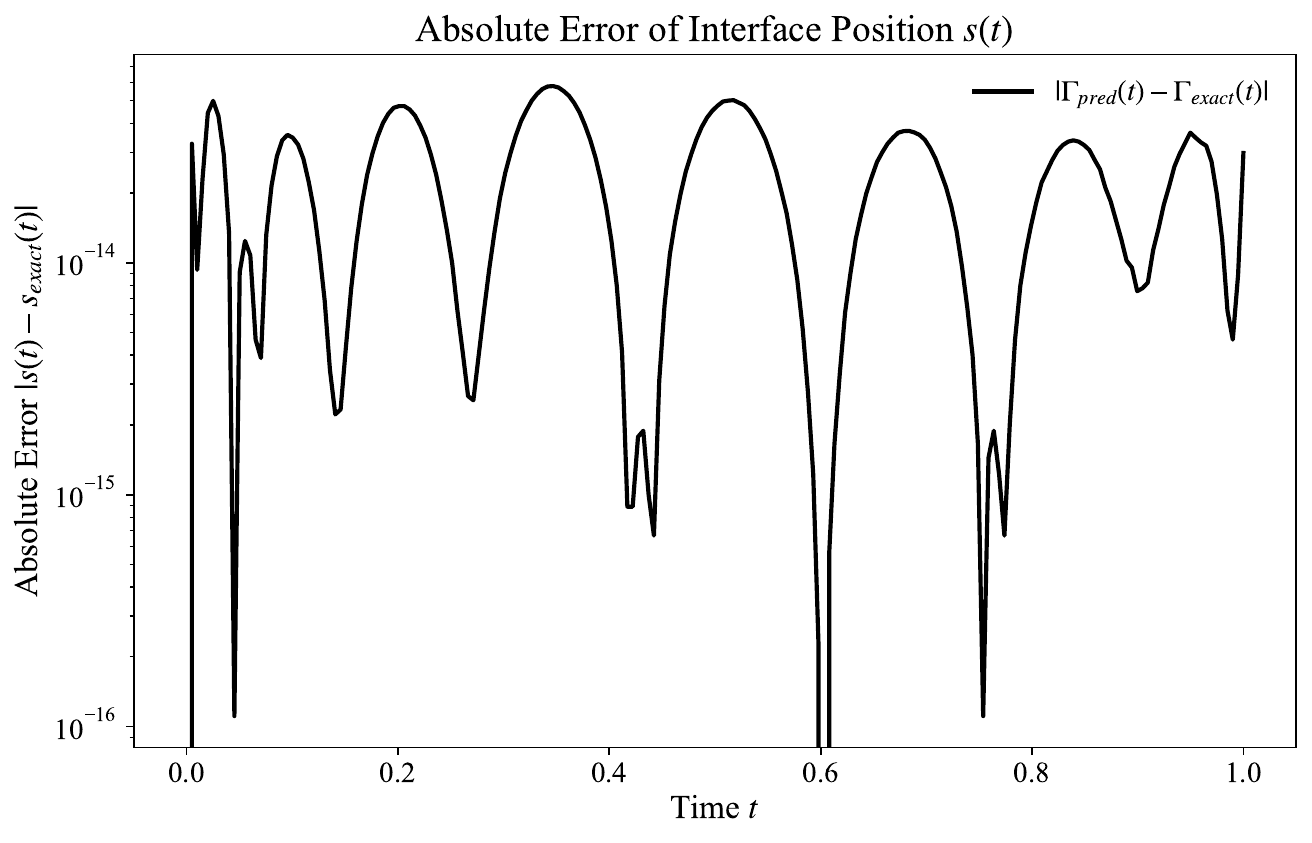}
        \caption{}
        \label{fig:2phase_boundary_error2}
    \end{subfigure}
    \hfill
    % Second Subfigure: Two-Phase Solution Error Heatmap
    \begin{subfigure}[b]{0.48\textwidth}
        \centering
        \includegraphics[width=\textwidth]{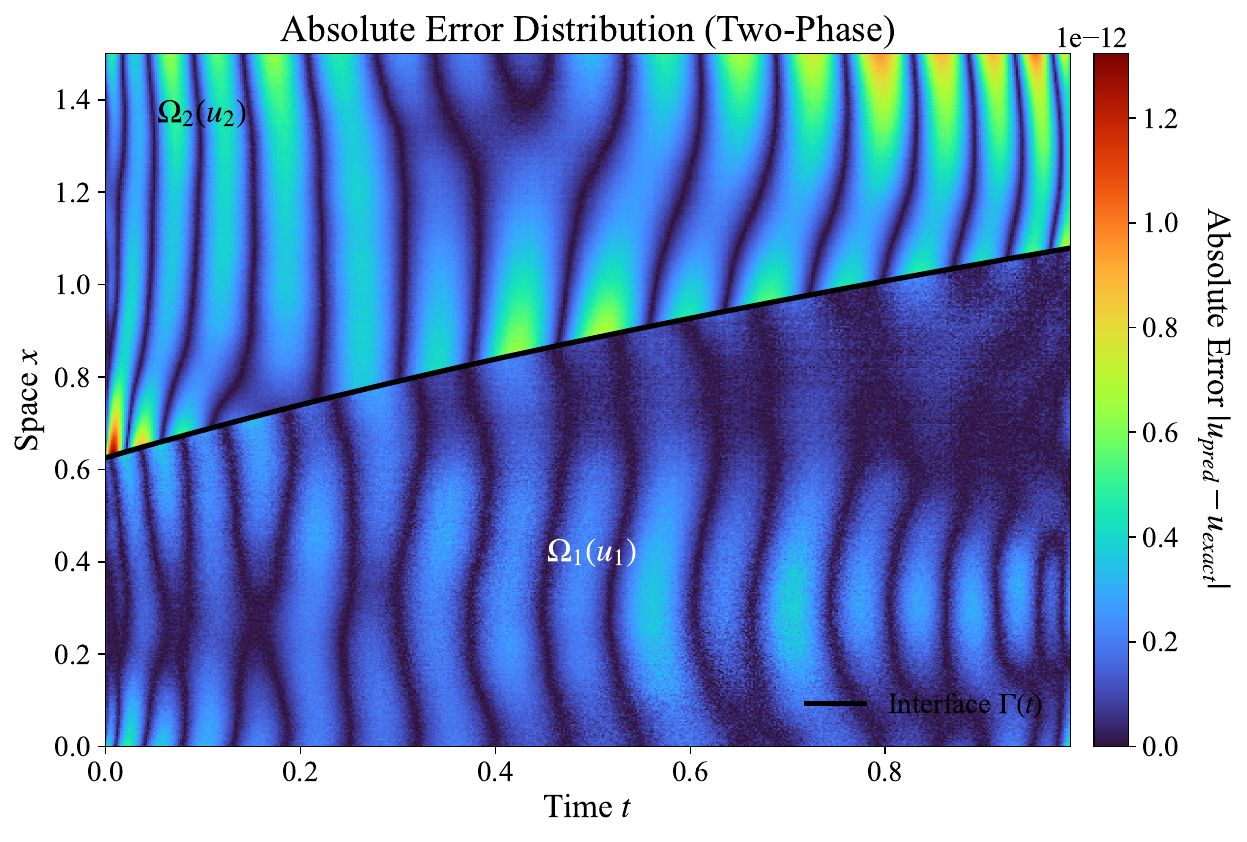}
        \caption{}
        \label{fig:2phase_heatmap2}
    \end{subfigure}
    \caption{Numerical results for the one-dimensional two-phase Stefan problem (Case~2).
(a) Absolute error of the predicted free boundary position $\Gamma(t)$ versus time.
(b) Spatiotemporal distribution of the pointwise absolute error
$|u_{\mathrm{pred}}(x,t) - u_{\mathrm{exact}}(x,t)|$.
The dashed line indicates the exact interface trajectory.}
    \label{fig:case4_results}
\end{figure}

Figure~\ref{fig:case4_traject}(a) shows the evolution of the predicted interface
$\Gamma^{(k)}(t)$ at selected iteration steps.
Starting from the initial guess $\Gamma^{(0)}(t)=\Gamma_0$, the interface trajectories at
$k=\{0, 1, 2, 5\}$ are displayed and compared with the exact interface with relaxation factors $\rho = 0.5$.
The predicted interfaces progressively approach the exact trajectory as the iteration proceeds.
Figure~\ref{fig:case4_traject}(b) reports the convergence histories of the Stefan flux residual
$\|\mathcal{R}_{\mathrm{flux}}\|_2$ and the geometric $L^2$ error
$\|\Gamma^{(k)} - \Gamma_{\mathrm{exact}}\|_2$ as functions of the iteration index $k$ with the same relaxation factors.
The geometric $L^2$ error $\|e_\Gamma\|_{L^2}$ reach the level of
$10^{-13}$ after approximately $30$ iterations.

\begin{figure}[h]
    \centering
    % First Subfigure: Two-Phase Boundary Error
    \begin{subfigure}[b]{0.48\textwidth}
        \centering
        \includegraphics[width=\textwidth]{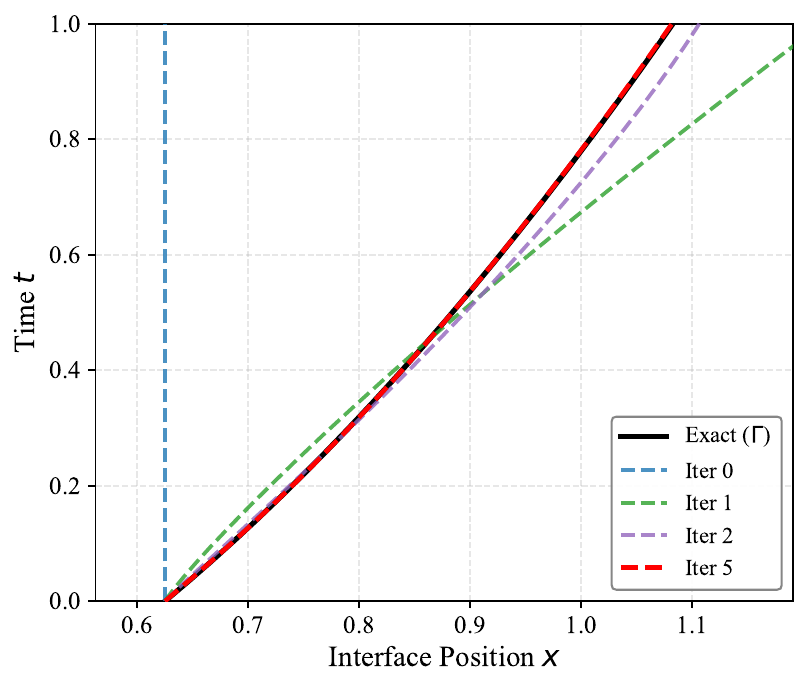}
        \caption{}
        \label{fig:2phase_trajectory}
    \end{subfigure}
    \hfill
    % Second Subfigure: Two-Phase Solution Error Heatmap
    \begin{subfigure}[b]{0.48\textwidth}
        \centering
        \includegraphics[width=\textwidth]{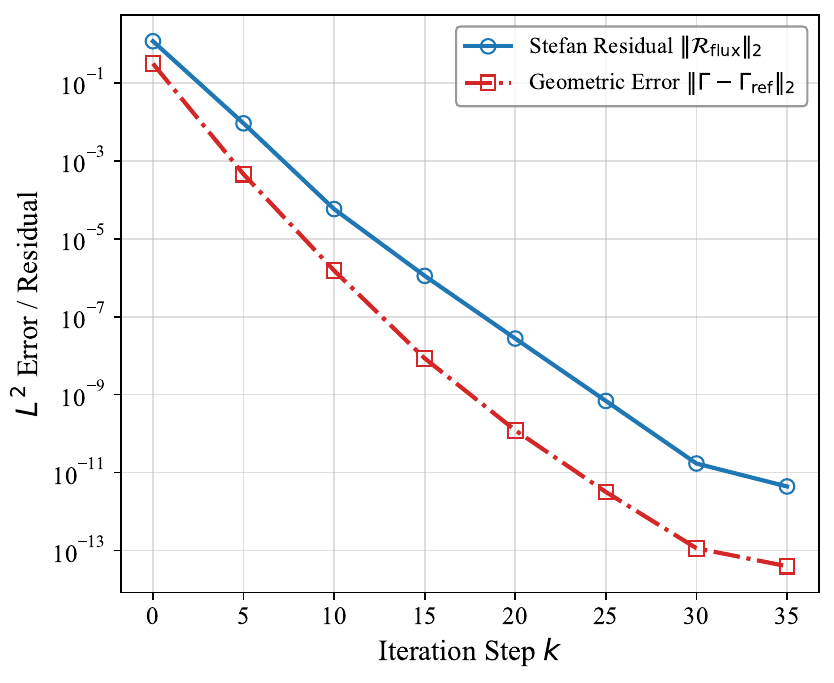}
        \caption{}
        \label{fig:2phase_convergence}
    \end{subfigure}
    \caption{\textbf{Convergence analysis for the one-dimensional two-phase Stefan problem of $\rho = 0.5$.} (a) Evolution of the predicted free boundary trajectory $s^{(k)}(t)$ at selected iteration steps $k=\{0, 2, 5, 10, 50\}$. (b) Convergence history of Stefan flux residual $\|\mathcal{R}_{\text{flux}}\|_2$ and the geometric $L^2$ error relative to the exact interface.}
    \label{fig:case4_traject}
\end{figure}

Figure~\ref{fig:rho} shows the convergence histories of the geometric $L^2$ error
$\|\Gamma^{(k)} - \Gamma_{\mathrm{exact}}\|_2$
for several values of the relaxation parameter $\rho$.
For all tested values of $\rho$, the geometric error decreases monotonically with respect to the iteration $k$.
On the semi-logarithmic scale, the error curves exhibit approximately linear decay over a range of iterations,
indicating a consistent reduction of the interface error per iteration.
Different choices of $\rho$ lead to distinct convergence rates.
Larger values of $\rho$ result in steeper initial slopes of the error curves,
while smaller values correspond to a more gradual decay.
In all cases, the geometric error is reduced to the level of $10^{-13}$ within a finite number of iterations.
This figure illustrates the influence of the relaxation parameter on the iterative convergence behavior of the interface update.

\begin{figure}[h]
    \centering
    \includegraphics[width=0.5\linewidth]{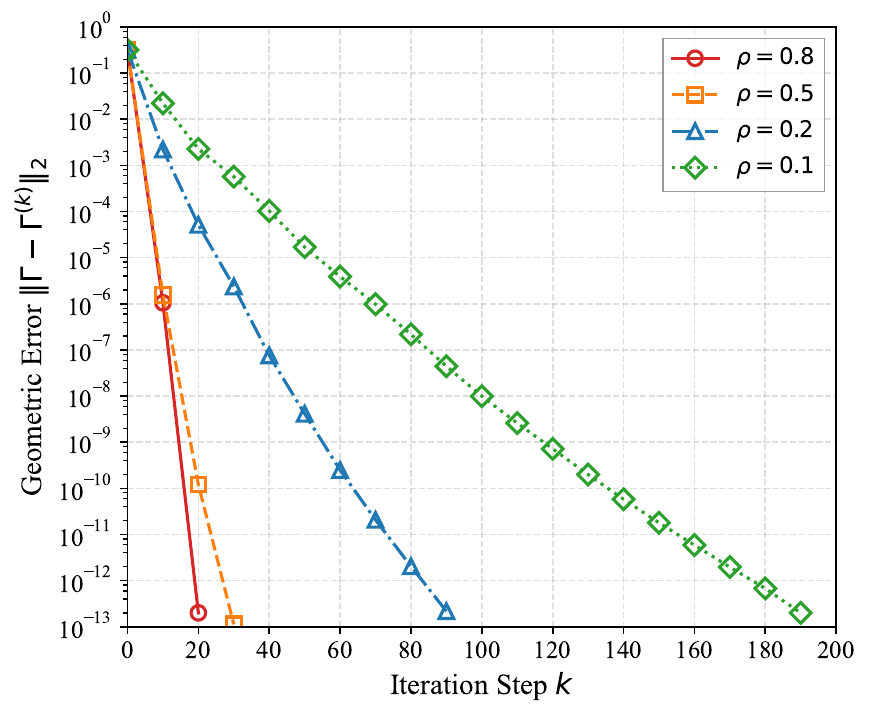}
    \caption{Convergence curves with different relaxation factors $\rho$}
    \label{fig:rho}
\end{figure}

\subsection{Two-dimensional two-phase Stefan problem}
\label{sec:2D2phase}
We next consider a two-dimensional two-phase problem~\cite{wang2021deep,ren2025physics} to assess the numerical behavior. 
The computational domain is the rectangle
$\Omega = [0,2] \times [0,1]$, which is separated by a moving interface
$x = \Gamma(y,t)$ into two subdomains,
\[
\Omega_1(t) = \{\, 0 \le x \le \Gamma(y,t) \,\}, \qquad
\Omega_2(t) = \{\, \Gamma(y,t) \le x \le 2 \,\}.
\]
The solution $u_1$ and $u_2$ in the two phases are governed by heat equations
with distinct diffusivities:
\begin{align}
    \label{eq:2d_pde_1}
    & \frac{\partial u_1}{\partial t}
    - k_1\left( \frac{\partial^2 u_1}{\partial x^2}
    + \frac{\partial^2 u_1}{\partial y^2} \right) = 0,
    \quad \text{in } \Omega_1(t), \ t \in (0,1], \\
    \label{eq:2d_pde_2}
    & \frac{\partial u_2}{\partial t}
    - k_2\left( \frac{\partial^2 u_2}{\partial x^2}
    + \frac{\partial^2 u_2}{\partial y^2} \right) = 0,
    \quad \text{in } \Omega_2(t), \ t \in (0,1].
\end{align}
where $k_1 = 2$ and $k_2 = 1$. The initial conditions for the solution and the interface position are given by
\begin{align}
    u_1(x,y,0) = 2e^{\frac{1-2x}{4}} - 2, \quad 
    u_2(x,y,0) = e^{\frac{1-2x}{2}} - 1, \quad 
    \Gamma(y,0) = 0.5.
\end{align}
Time-dependent Dirichlet boundary conditions are imposed on the fixed boundaries
\begin{align}
    & u_1(0,y,t) = 2e^{\frac{2t+1}{4}} - 2, \quad u_2(2,y,t) = e^{\frac{2t-3}{2}} - 1, \\
    & u_1(x,0,t) = u_1(x,1,t) = 2e^{\frac{2t-2x+1}{4}} - 2, \quad 0 \le x \le \Gamma(y,t), \\
    & u_2(x,0,t) = u_2(x,1,t) = e^{\frac{2t-2x+1}{2}} - 1, \quad \Gamma(y,t) \le x \le 2.
\end{align}
On the moving interface $x=s(y,t)$, the temperature is continuous,
\[
u_1 = u_2 = 0.
\]
The interface evolution is governed by the Stefan condition, which enforces
the balance of normal heat fluxes across the interface.
For an interface represented as a graph $x=\Gamma(y,t)$, this condition can be written as
\begin{equation}
    2 \left( \frac{\partial u_1}{\partial x} - \frac{\partial u_1}{\partial y}\frac{\partial s}{\partial y} \right) - \left( \frac{\partial u_2}{\partial x} - \frac{\partial u_2}{\partial y}\frac{\partial s}{\partial y} \right) = \frac{\partial s}{\partial t}, \quad \text{for } x = \Gamma(y,t).
\end{equation}
This problem admits an exact solution given by
\[
u_1(x,y,t) = 2e^{(2t-2x+1)/4} - 2, \qquad
u_2(x,y,t) = e^{(2t-2x+1)/2} - 1,
\]
with a planar interface
\[
s(y,t) = t + 0.5.
\]

Figure~\ref{fig:exp3_error_snapshots} shows the pointwise absolute error
$|u_{\mathrm{pred}} - u_{\mathrm{exact}}|$ at three representative time instances
$t = 0.20$, $0.50$, and $0.90$.
The moving interface separates the computational domain into two subdomains corresponding to the two phases.
As illustrated in the figure, the error magnitude in both phases is on the order of $10^{-12}$ 
and the error level remains comparable in the regions of each phase.

\begin{figure}[h]
    \centering
    \includegraphics[width=0.95\textwidth]{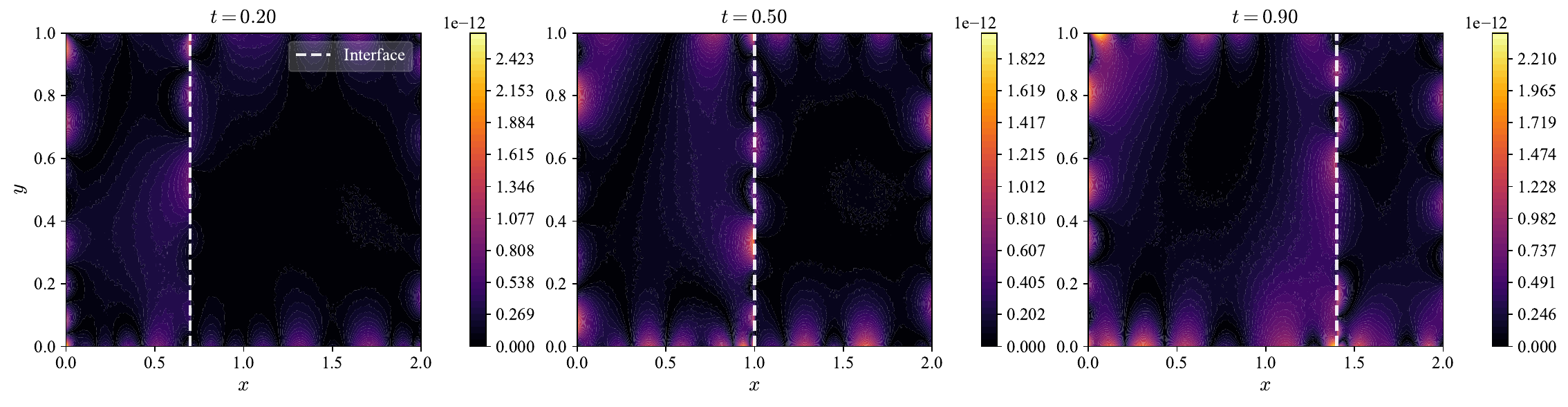}
    \caption{Snapshots of the pointwise absolute error distribution $|u - u_{\text{exact}}|$ at $t=0.20, 0.50, 0.90$. The moving interface (black dashed line) separates the two phases, and the error remains uniformly low across both subdomains.}
    \label{fig:exp3_error_snapshots}
\end{figure}

To quantify the global accuracy, Figure~\ref{fig:exp3_convergence}(a) presents the temporal evolution
of the relative $L^2$ error of the solution. 
The relative $L_2$ error remains on the level of $10^{-13}$ throughout the time interval.
Figure~\ref{fig:exp3_convergence}(b) displays the time--space evolution of the free boundary
$\Gamma(y,t)$.
The surface plot represents the numerical interface position, while the color map indicates the absolute geometric error.
As illustrated by the color scale, the absolute geometric error on the interface remains at the level of $10^{-13}$ over the entire space--time domain.
\begin{figure}[h]
    \centering
    \begin{subfigure}[b]{0.48\textwidth}
        \centering
        \includegraphics[width=\textwidth]{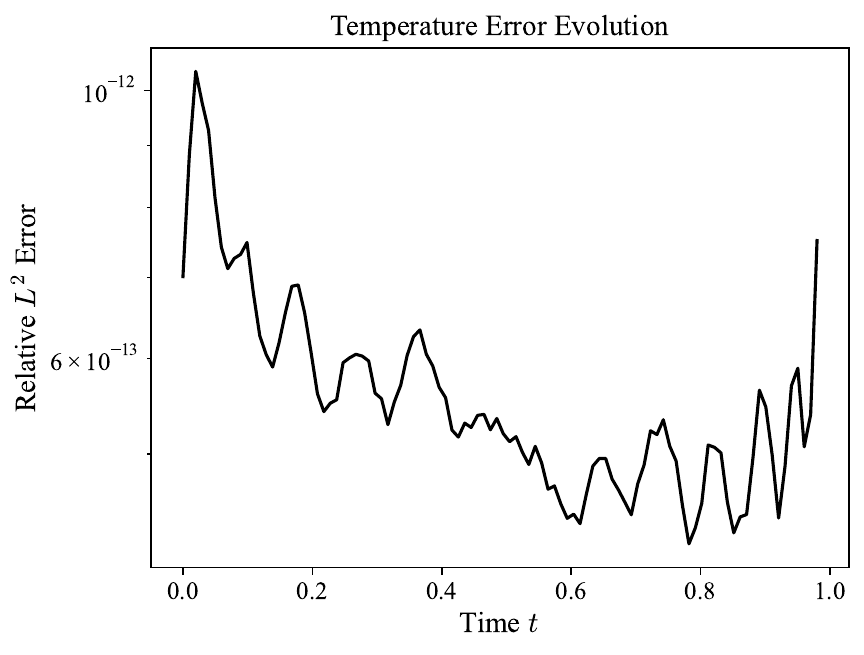}
        \caption{Temporal evolution of the relative $L^2$ error for the temperature field.}
        \label{fig:exp3_l2_error}
    \end{subfigure}
    \hfill
    \begin{subfigure}[b]{0.48\textwidth}
        \centering
        \includegraphics[width=\textwidth]{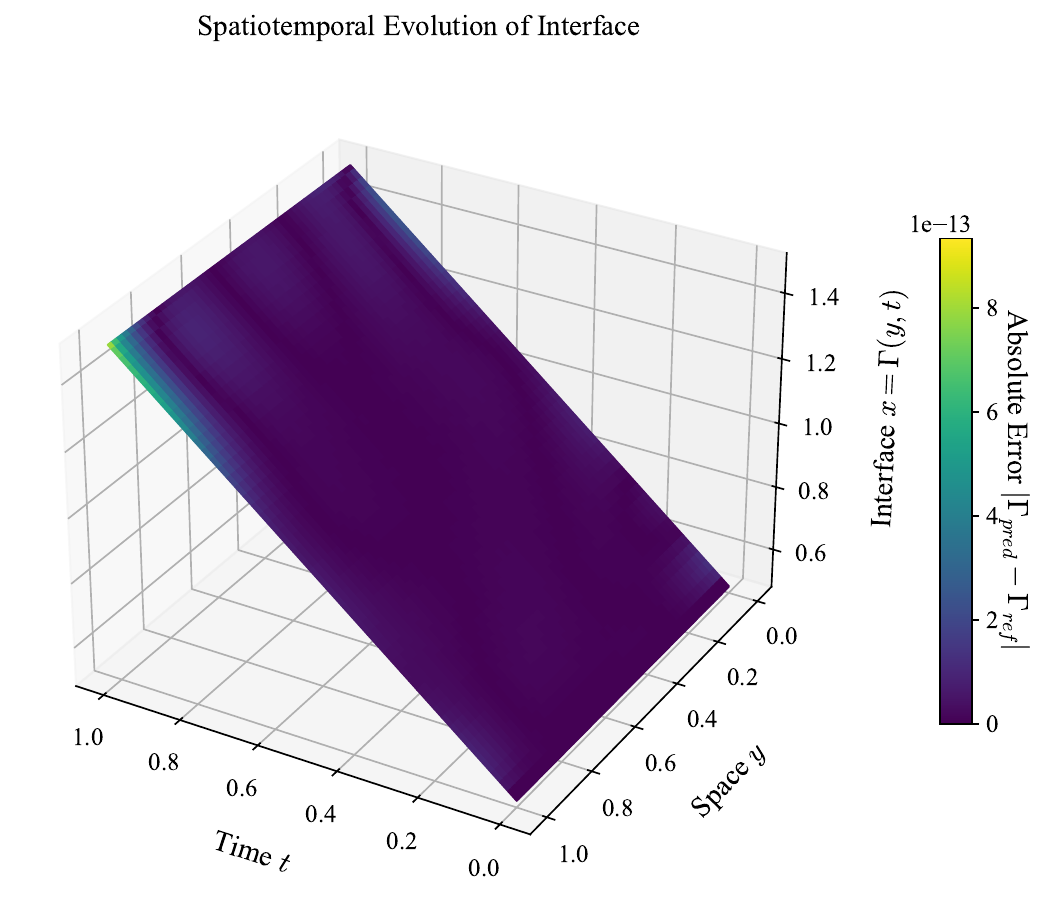}
        \caption{Spatiotemporal reconstruction of the free boundary $\Gamma(y,t)$. The color scale indicates the absolute error relative to the exact planar solution.}
        \label{fig:exp3_interface_3d}
    \end{subfigure}
    \caption{(a) Convergence history of the proposed method, showing stable $L^2$ error decay. (b) 3D visualization of the interface trajectory and error distribution.}
    \label{fig:exp3_convergence}
\end{figure}

These results demonstrate that the proposed method accurately resolves both the solution
and the moving interface in a two-dimensional two-phase stefan problem.

\subsection{Stefan problem with thermal resistance}
\label{sec:thermal}

In this subsection, we consider a two-phase Stefan problem with interfacial thermal
resistance in a two-dimensional domain $\Omega \subset \mathbb{R}^2$.
The domain is separated by a moving interface $\Gamma(t)$ into a solid region
$\Omega_s(t)$ and a liquid region $\Omega_l(t)$.
The temperature field $u(\boldsymbol{x},t)$ satisfies the heat equation in each phase,
coupled through a Stefan condition and an imperfect thermal contact condition at the
interface.

The governing equations are given by
\begin{equation}
\label{eq:stefan_resistance}
\left\{
\begin{aligned}
    & \rho c_p \frac{\partial u}{\partial t}
      - \nabla \cdot (k \nabla u) = 0,
      && \text{in } \Omega \setminus \Gamma(t), \\
    & \rho L V_n
      = k_s \frac{\partial u_s}{\partial \boldsymbol{n}}
        - k_l \frac{\partial u_l}{\partial \boldsymbol{n}},
      && \text{on } \Gamma(t), \\
    & u_l - u_s
      = R_{th}\, k_l \frac{\partial u_l}{\partial \boldsymbol{n}},
      && \text{on } \Gamma(t), \\
    & u_l = g_l,\quad u_s = g_s,
      && \text{on } \partial\Omega \times [0,T], \\
    & u_l(\boldsymbol{x},0) = u_{0l},\quad
      u_s(\boldsymbol{x},0) = u_{0s},
      && \boldsymbol{x} \in \Omega.
\end{aligned}
\right.
\end{equation}
Here, $\rho$, $c_p$, and $k$ denote the density, specific heat capacity,
and thermal conductivity, respectively, and $L$ is the latent heat of fusion.
The unit normal vector $\boldsymbol{n}$ is oriented from the solid phase
$\Omega_s$ toward the liquid phase $\Omega_l$, and $V_n$ denotes the normal velocity
of the moving interface.
The third equation models the temperature jump
$\llbracket u \rrbracket = u_l - u_s$ induced by the interfacial thermal resistance
$R_{th}$.

The computational domain is chosen as $\Omega = [0,1] \times [0,1]$.
Unless otherwise stated, all physical parameters are set to unity,
i.e., $L = \rho = c_p = 1$ and $k_s = k_l = 1$,
while the thermal resistance $R_{th}$ is varied.

To facilitate quantitative validation, we consider a planar traveling-wave solution.
Let
\[
\xi(\boldsymbol{x},t) = \frac{x+y}{\sqrt{2}} - V t
\]
denote the coordinate along the propagation direction, so that the interface is
located at $\xi = 0$.
The exact temperature field is given by
\begin{equation}
\label{eq:exact_sol_formula}
u(\boldsymbol{x},t)
=
\begin{cases}
    0,
    & \xi(\boldsymbol{x},t) < 0 \quad (\text{solid phase}), \\[4pt]
    T_\infty
    + \dfrac{L}{c_p}
      \exp\!\left(
        - \dfrac{\rho c_p V}{k_l}\, \xi(\boldsymbol{x},t)
      \right),
    & \xi(\boldsymbol{x},t) \ge 0 \quad (\text{liquid phase}),
\end{cases}
\end{equation}
where the far-field temperature $T_\infty$ is determined from the jump condition as
\begin{equation}
    T_\infty = L \left( R_{th} V - \frac{1}{c_p} \right).
\end{equation}

To assess the pointwise accuracy of the numerical solution, we first examine the
spatial distribution of the absolute temperature error
$|u_{\mathrm{pred}} - u_{\mathrm{ref}}|$.
Figure~\ref{fig:exp7_Fig1_ErrorSnapshots} shows error snapshots at three representative
time instances $t \in \{0.2, 0.5, 0.9\}$.
The error remains uniformly small throughout the domain, with magnitudes on the order
of $10^{-10}$.

The main difficulty of this benchmark lies in the accurate treatment of the interfacial
temperature discontinuity induced by the thermal resistance $R_{th}$.
In the proposed framework, the discontinuous Stefan problem is decomposed into a sequence
of linear parabolic interface subproblems that retain a convex optimization structure.
This formulation provides intrinsic robustness with respect to interfacial jumps and
enables stable resolution of thermal resistance effects without introducing spurious
oscillations.
Figure~\ref{fig:exp7_Fig4_TemperatureJump3D} presents three-dimensional visualizations
of the predicted temperature field, where a clear separation between the solid
(blue) and liquid (red) phases is observed.
The sharp temperature jump at the moving interface $x = s(y,t)$ is well preserved
throughout the evolution.
%% --- 第一组：宽图垂直堆叠 (Fig 1 & Fig 4) ---
\begin{figure}[h]
    \centering
    
    % --- Top: Error Snapshots ---
    \begin{minipage}{1.0\linewidth}
        \centering
        \includegraphics[width=1.0\linewidth]{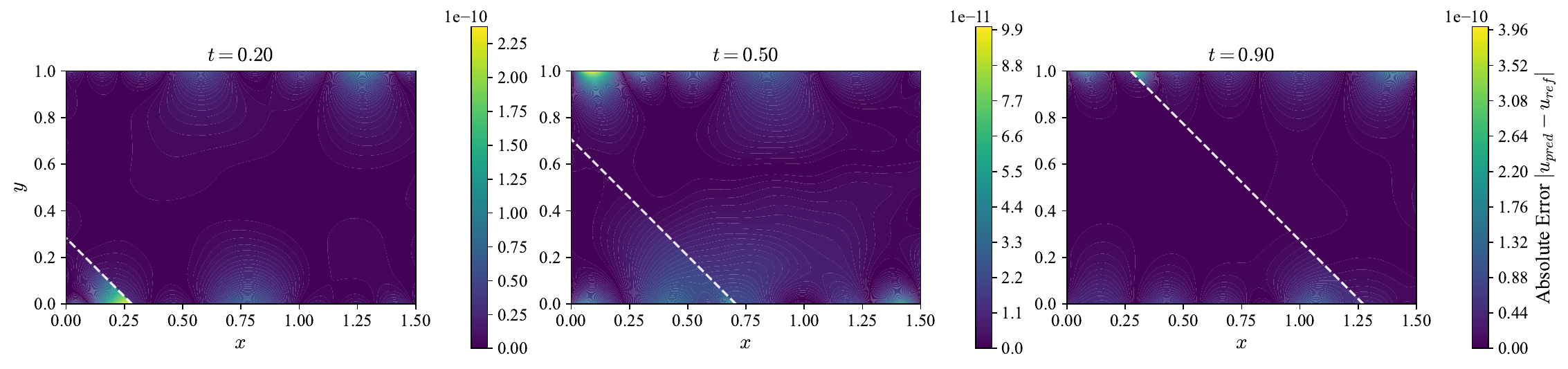}
        \caption{Spatial distribution of the absolute error $|u_{pred} - u_{ref}|$ at three representative time instances.}
        \label{fig:exp7_Fig1_ErrorSnapshots}
    \end{minipage}
    
    \vspace{0.5cm} % 图片间距
    
    % --- Bottom: Temperature Jump ---
    \begin{minipage}{1.0\linewidth}
        \centering
        \includegraphics[width=1.0\linewidth]{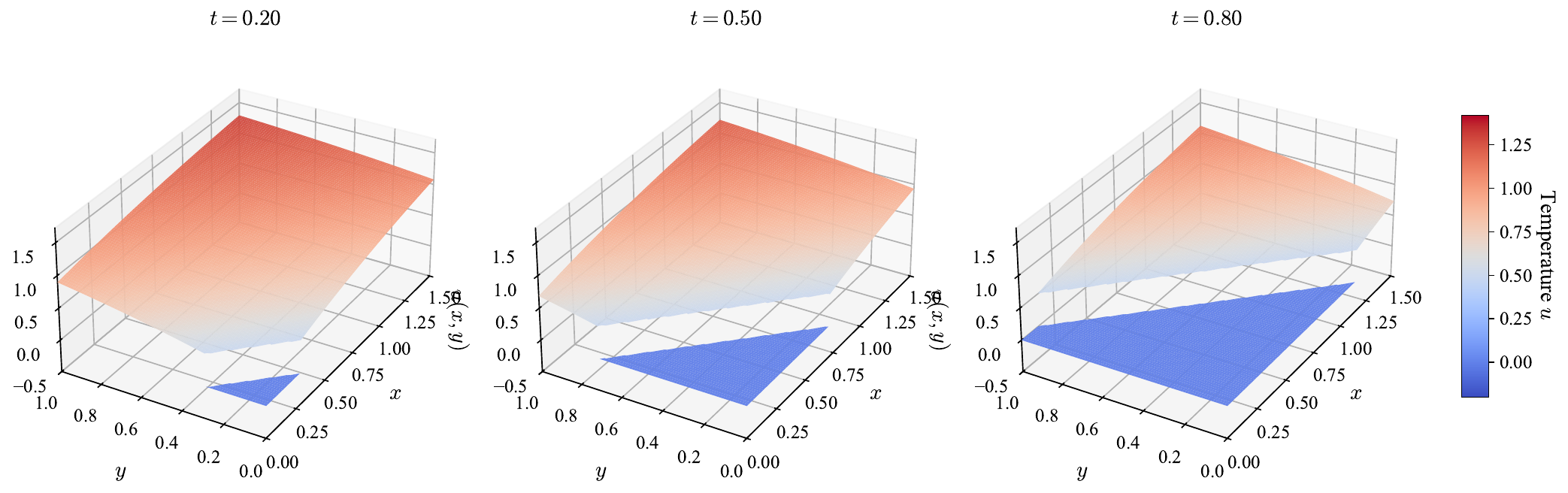}
        \caption{Three-dimensional visualization of the thermal discontinuity (temperature jump) across the interface. The gap between the solid (blue) and liquid (red) surfaces verifies the correct capture of the Stefan condition with thermal resistance.}
        \label{fig:exp7_Fig4_TemperatureJump3D}
    \end{minipage}
\end{figure}

Figure~\ref{fig:exp7_Fig2_L2Error} depicts the temporal evolution of the relative $L^2$
error of the temperature field over the time interval $t \in [0,T]$.
The error remains stable in time and does not exhibit noticeable accumulation,
indicating the robustness of the time-marching procedure for this parabolic free
boundary problem.

Finally, the reconstruction of the free boundary manifold is shown in
Figure~\ref{fig:exp7_Fig3_Interface3D}.
The interface trajectory
$\Gamma = \{(x,y,t) \mid x = s(y,t)\}$ is visualized in the space--time domain, with
the surface color indicating the relative geometric error
$|s_{\mathrm{pred}} - s_{\mathrm{ref}}| / |s_{\mathrm{ref}}|$.
The interface position is recovered with near-machine precision, confirming the
accurate enforcement of the Stefan condition and the interfacial jump relation.
%% --- 第二组：方图水平并排 (Fig 2 & Fig 3) ---
\begin{figure}[h]
    \centering
    
    % --- Left: L2 Error ---
    \begin{minipage}{0.48\linewidth}
        \centering
        \includegraphics[width=1.0\linewidth]{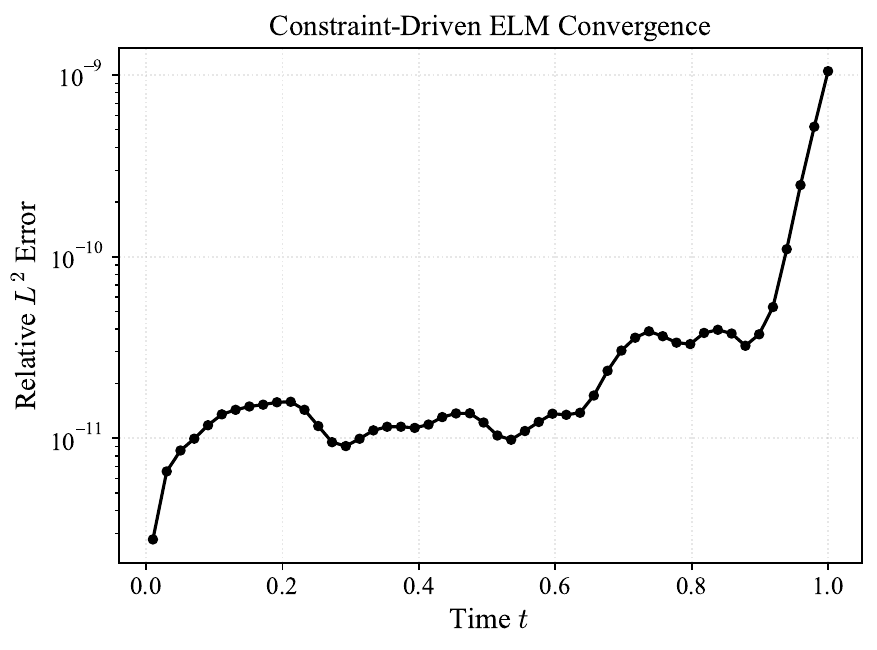}
        \caption{Temporal convergence analysis showing the relative $L^2$ error of the temperature field in the bulk domain.}
        \label{fig:exp7_Fig2_L2Error}
    \end{minipage}
    \hfill % 把两张图撑开到两边
    % --- Right: Interface 3D ---
    \begin{minipage}{0.48\linewidth}
        \centering
        % 注意：这里用的是您刚才生成的时空曲面图
        \includegraphics[width=1.0\linewidth]{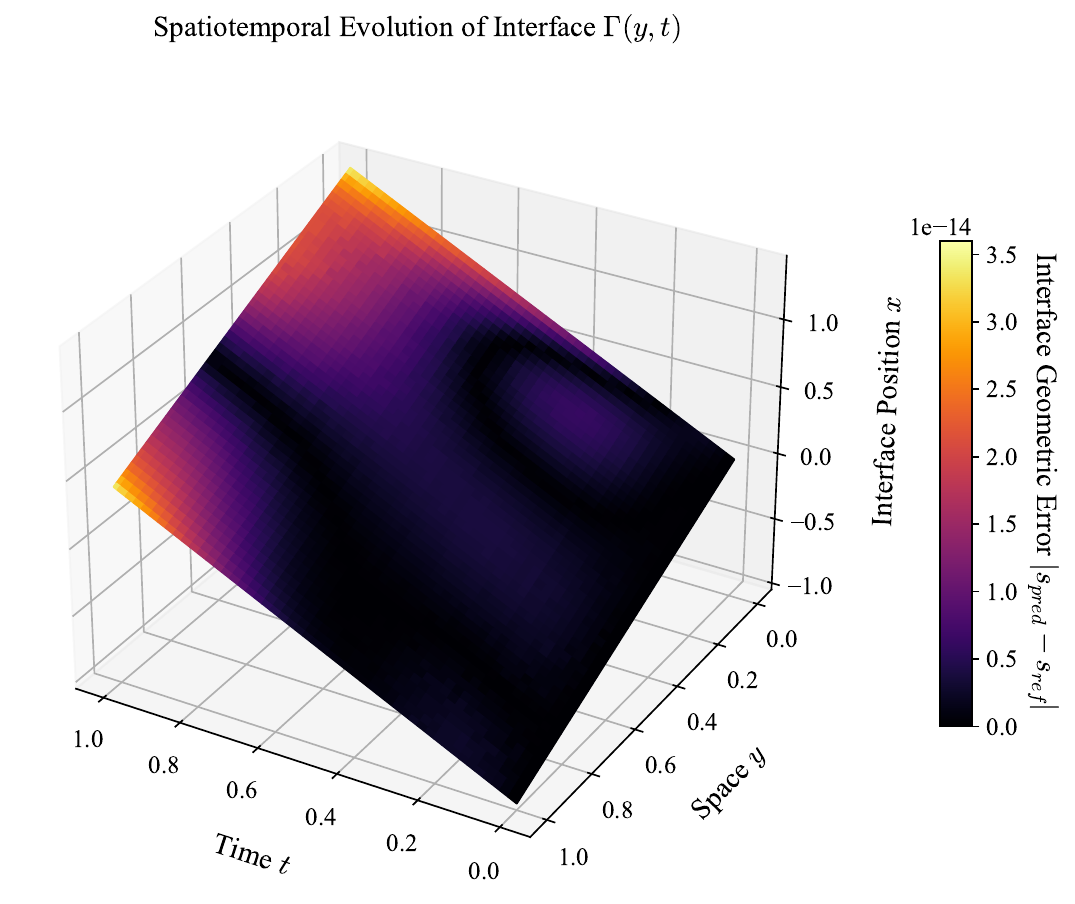} 
        \caption{Spatiotemporal reconstruction of the free boundary manifold $\Gamma = \{(x,y,t) \mid x = s(y,t)\}$. The color represents the relative geometric error.}
        \label{fig:exp7_Fig3_Interface3D}
    \end{minipage}
\end{figure}

\subsection{Frank sphere problem}
In this section, we consider two-dimensional and three-dimensional Frank sphere
benchmark problems~\cite{frank1950radially}, which admit self-similar analytical
solutions.
\subsubsection*{Case 1. Two-dimensional Frank sphere}

We first examine the two-dimensional Frank sphere problem, which describes the
radial growth of a solid cylinder $\Omega_s(t)$ into a supercooled liquid region
$\Omega_l(t)$.
The solution is solved in the liquid phase
$\Omega_l(t) = \{ (x,y)\in\mathbb{R}^2 \mid r > R(t) \}$,
while the solid phase is assumed to remain at the melting temperature $u_m$.

The governing equations read
\begin{equation}
\label{eq:frank2d_pde}
\begin{cases}
\displaystyle
\frac{\partial u}{\partial t} = \Delta u,
& r > R(t), \quad t > 1, \\[6pt]
u = u_m,
& r = R(t), \\[4pt]
\displaystyle
\lim_{r\to\infty} u(r,t) = u_\infty, & \\[6pt]
\displaystyle
V_n = -\frac{1}{L}\,\nabla u \cdot \mathbf{n},
& r = R(t),
\end{cases}
\end{equation}
where $u$ denotes the liquid temperature, $\mathbf{n}$ is the outward unit normal
pointing into the liquid phase, and $V_n$ is the normal velocity of the interface.

The melting temperature and far-field temperature
are fixed as
\[
u_m = 0, \qquad u_\infty = -1,
\]
and the growth constant is set to $\lambda = 0.5$.
The latent heat parameter $L$ is determined consistently from the Stefan condition as
\[
L = \frac{u_m - u_\infty}{\lambda^2 e^{\lambda^2} E_1(\lambda^2)},
\]
where $E_1(z) = \int_z^\infty \frac{e^{-s}}{s}\,ds$ denotes the exponential integral
function.
This problem admits an exact self-similar solution, commonly referred to as the
Frank sphere solution, given by
\begin{equation}
    u(r,t)
    = u_\infty + (u_m - u_\infty)
    \frac{E_1(r^2/4t)}{E_1(\lambda^2)},
    \qquad r \ge R(t),
\end{equation}
where $r = \sqrt{x^2 + y^2}$ and
$E_1(z) = \int_z^\infty \frac{e^{-s}}{s}\,ds$
denotes the exponential integral function.
The interface radius evolves according to
\[
R(t) = 2\lambda \sqrt{t},
\]
with the parameter $\lambda$ determined by a transcendental equation associated
with the Stefan number.

In the numerical approximation, the solution on the solid field is represented by the
basis expansion
\begin{equation*}
    u^N(r,t)
    = \sum_{j=1}^N \beta_j\,
    \sigma\bigl(\mathbf{w}_j \cdot (r,t) + b_j\bigr),
\end{equation*}
where $r = \sqrt{x^2 + y^2}$.
The moving interface is approximated by $\Gamma^N(t) = \Gamma\bigl(r(\theta,t),\, t\bigr)$.

Numerical results for the two-dimensional Frank sphere problem are shown in
Figures~\ref{fig:frank2D_ErrorSnapshots}--\ref{fig:frank2D_interface}.
Figure~\ref{fig:frank2D_ErrorSnapshots} presents the pointwise absolute error
$|u_{\mathrm{pred}} - u_{\mathrm{ref}}|$ at three representative time instances $t = [1.5, 2.0, 3.0]$.
The error remains uniformly distributed in the liquid domain, with the maximum magnitude
on the order of $10^{-12}$.

\begin{figure}[h]
    \centering
    \includegraphics[width=0.95\linewidth]{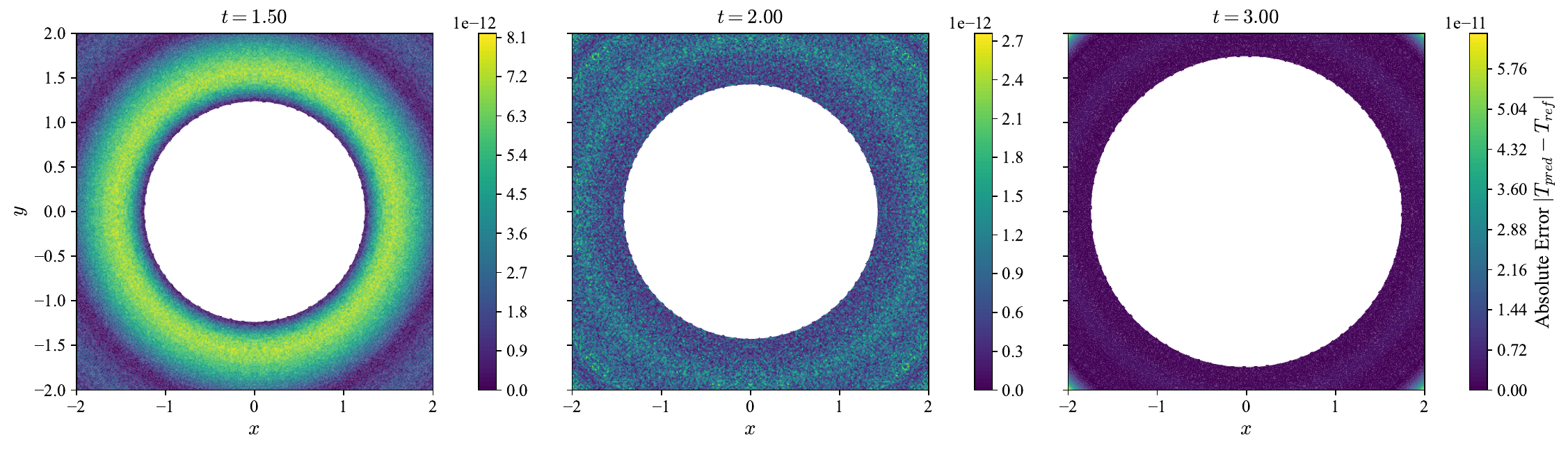}
    \caption{Snapshots of the absolute error distribution of the temperature field at $t=1.50$, $t=2.00$, and $t=3.00$. The white dashed line indicates the moving interface $\Gamma(t)$.}
    \label{fig:frank2D_ErrorSnapshots}
\end{figure}

Figure~\ref{fig:frank2D_L2Evolution} shows the temporal evolution of the relative $L^2$ error
of the temperature field.
The error remains below $10^{-11}$ over the reported time interval $t\in[1,3]$ and exhibits a smooth
evolution without oscillatory behavior.
Figure~\ref{fig:frank2D_interface} visualizes the time--space evolution of the free boundary
$\Gamma(t)$.
The surface is colored by the absolute geometric error on the interface $|\Gamma_{\mathrm{pred}}(t) - \Gamma_{\mathrm{ref}}(t)|$, which remains on the order of $10^{-12}$ throughout the whole simulation.

\begin{figure}[h]
    \centering
    % Left Minipage for L2 Error
    \begin{minipage}{0.48\textwidth}
        \centering
        \includegraphics[width=\linewidth]{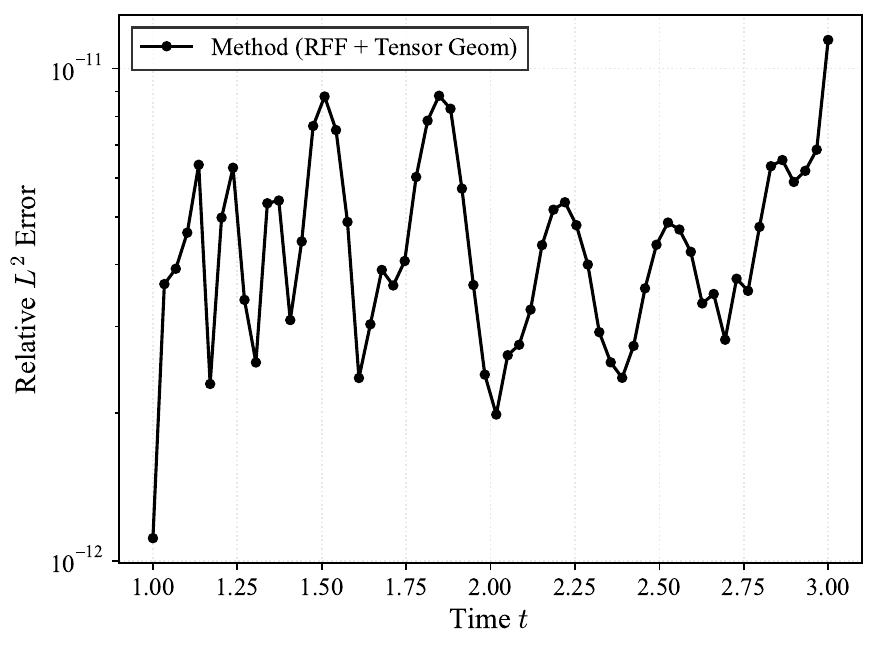}
        \caption{Time evolution of the relative $L^2$ error in the liquid domain. The stable trend confirms the robustness of the solver.}
        \label{fig:frank2D_L2Evolution}
    \end{minipage}
    \hfill % This creates the space between the figures
    % Right Minipage for 3D Interface
    \begin{minipage}{0.48\textwidth}
        \centering
        \includegraphics[width=\linewidth]{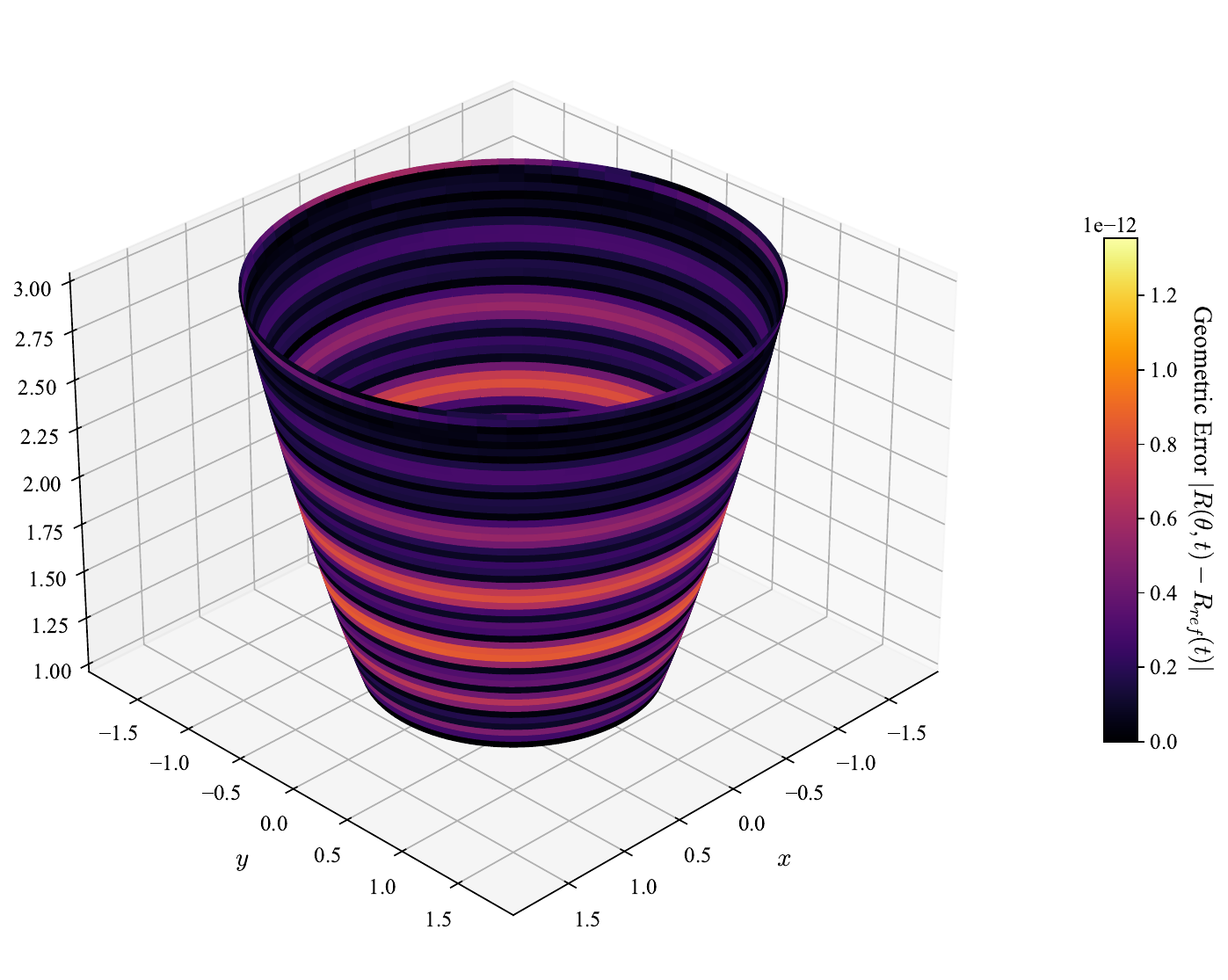}
        \caption{Spatiotemporal evolution of the free boundary $\Gamma(t)$ in the $(x, y, t)$ space. The color map represents the geometric radial error.}
        \label{fig:frank2D_interface}
    \end{minipage}
\end{figure}
\subsubsection*{Case 2. Three-dimensional Frank sphere}

We next consider the three-dimensional Frank sphere problem, which describes a spherical crystal growing into a supercooled liquid from a solid field.
The problem is radially symmetric, and the liquid phase occupies the exterior region
\[
\Omega(t) = \{\, \mathbf{x} \in \mathbb{R}^3 \mid \|\mathbf{x}\| > R(t) \,\}.
\]
The dimensionless temperature field $u(\mathbf{x},t)$ in the liquid phase satisfies
the heat equation
\begin{equation}
    \frac{\partial u}{\partial t} = \Delta u,
    \qquad \mathbf{x} \in \Omega(t), \ t > 0.
\end{equation}
On the moving interface
$\Gamma(t) = \{\, \mathbf{x} \mid \|\mathbf{x}\| = R(t) \,\}$,
the temperature is fixed at the melting temperature,
\begin{equation}
    u(\mathbf{x},t) = u_m = 0,
\end{equation}
and the interface velocity is governed by the Stefan condition
\begin{equation}
    L V_n = - \nabla u \cdot \mathbf{n},
    \qquad \mathbf{x} \in \Gamma(t),
\end{equation}
where $V_n$ denotes the outward normal velocity of the interface,
$\mathbf{n}$ is the unit normal pointing into the liquid phase,
and $L$ is the latent heat.
The far-field condition is prescribed as
\[
\lim_{\|\mathbf{x}\|\to\infty} u(\mathbf{x},t) = u_\infty < 0 .
\]

This problem admits a classical self-similar solution, known as the Frank sphere solution.
The radius of the solid sphere evolves according to
\begin{equation}
    R(t) = 2\lambda \sqrt{t},
\end{equation}
where $\lambda > 0$ is a constant determined by the Stefan condition.
The temperature distribution in the liquid phase is given by
\begin{equation}
    u(r,t)
    = u_\infty + A\, F\!\left( \frac{r}{2\sqrt{t}} \right),
    \qquad r = \|\mathbf{x}\| \ge R(t),
\end{equation}
with
\begin{equation}
    F(\eta) = \frac{e^{-\eta^2}}{\eta} - \sqrt{\pi}\,\operatorname{erfc}(\eta),
\end{equation}
and the amplitude
\[
A = \frac{u_m - u_\infty}{F(\lambda)} .
\]
The latent heat $L$ is determined consistently from the Stefan condition as
\begin{equation}
    L
    = - \frac{A}{2\lambda} F'(\lambda)
    = \frac{-(u_m - u_\infty)}{2\lambda^3}
      e^{-\lambda^2} \big[ F(\lambda) \big]^{-1}.
\end{equation}

In the numerical experiments, we fix the parameters
$\lambda = 0.6$, $u_m = 0$, and $u_\infty = -1.0$,
and consider the time interval $t \in [1.0, 3.0]$.

Numerical results for the three-dimensional Frank sphere problem are illustrated in
Figures~\ref{fig:Frank3D_PlanesOnly}--\ref{fig:placeholder}.
Figure~\ref{fig:Frank3D_PlanesOnly} shows planar slices of the pointwise absolute error
$|u_{\mathrm{pred}} - u_{\mathrm{ref}}|$ at three representative time instances.
The error remains spatially smooth and radially symmetric, with the magnitude on the order of $10^{-11}$.

\begin{figure}[h]
    \centering
    \includegraphics[width=0.85\linewidth]{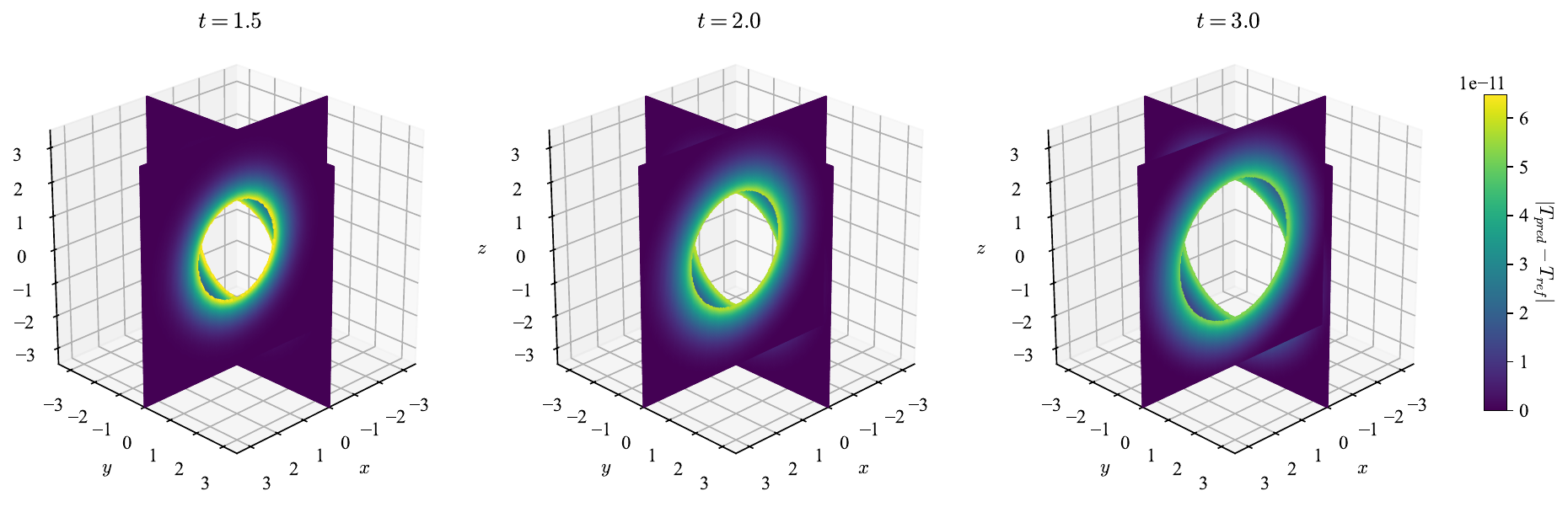}
    \caption{Pointwise absolute error
$|u_{\mathrm{pred}} - u_{\mathrm{ref}}|$ for the three-dimensional Frank sphere problem on plane $x=0$ and $y=0$
at selected time instances.}
    \label{fig:Frank3D_PlanesOnly}
\end{figure}

\begin{figure}[h]
    \centering
    \includegraphics[width=0.85\linewidth]{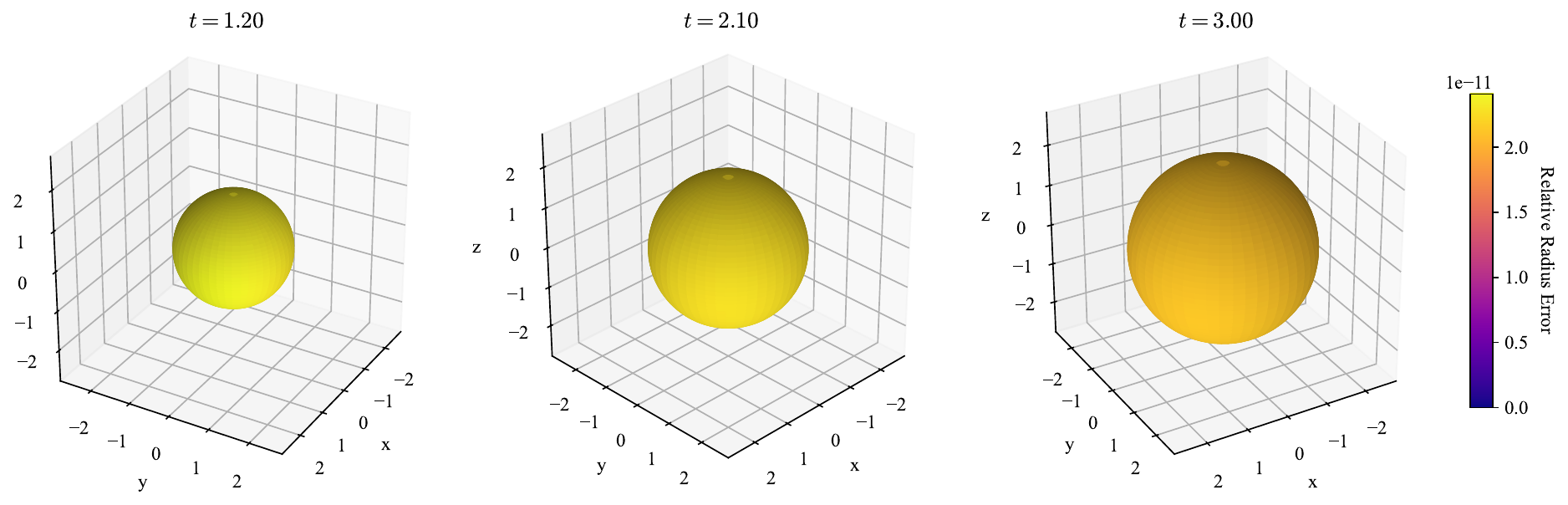}
    \caption{Three-dimensional visualization of the free boundary evolution over time.}
    \label{fig:Frank3D_Fig3_3D_Subplots}
\end{figure}

Figure~\ref{fig:Frank3D_Fig3_3D_Subplots} visualizes the numerical spherical interface
at multiple time levels.
The interface geometry follows the exact radial expansion, and the color map indicates
the relative radius error, which remains on the order of $10^{-11}$ throughout the simulation.
Figure~\ref{fig:placeholder} presents the temporal evolution of the relative $L^2$ error
of the temperature field.
The error remains bounded over the entire time interval and exhibits a stable decay trend
without oscillatory behavior.

\begin{figure}[h]
    \centering
    \includegraphics[width=0.5\linewidth]{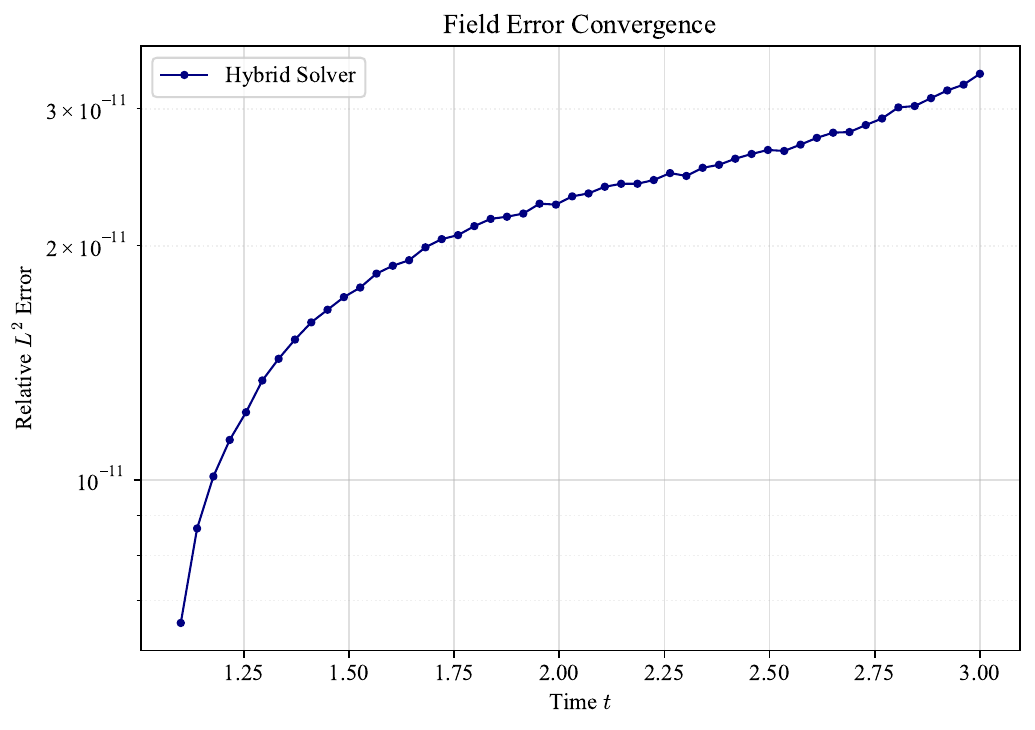}
    \caption{Temporal evolution of the relative $L^2$ error of the numerical solution
for the three-dimensional Frank sphere problem.}
    \label{fig:placeholder}
\end{figure}

\subsection{Mullins-Sekerka problem for quasi-stationary Stefan problem}
\label{sec:mullins}
In this section, we consider a two-dimensional Mullins--Sekerka problem arising from
the quasi-stationary Stefan model~\cite{eto2024rapid}.
The liquid domain $\Omega(t)$ is bounded internally by a moving interface $\Gamma(t)$
and externally by a fixed circular boundary $\Gamma_{\mathrm{out}}$.
Under the quasi-stationary assumption, the temperature field
$u(r,\theta)$ satisfies the Laplace equation in the bulk,
together with a Gibbs--Thomson condition on the interface and a Dirichlet condition
on the outer boundary.

The governing equations and kinematic conditions are given by
\begin{equation}
\begin{aligned}
    & \Delta u = 0, && \text{in } \Omega(t), \\
    & u(r,\theta) = u_\infty, && \text{on } r = R_{\mathrm{out}}, \\
    & u(r,\theta) = - \gamma \kappa, && \text{on } (r,\theta) \in \Gamma(t), \\
    & V_n = - \nabla u \cdot \mathbf{n}, && \text{on } \Gamma(t),
\end{aligned}
\end{equation}
where $V_n$ denotes the normal velocity of the interface,
$\mathbf{n}$ is the outward unit normal pointing into the liquid phase,
and $\kappa$ denotes the curvature of the interface.

For this benchmark, the far-field temperature is fixed as $u_\infty = 1.0$
on the outer boundary located at $R_{\mathrm{out}} = 4.0$.
The parameter $\gamma \ge 0$ controls the strength of the curvature (surface tension)
effect through the Gibbs--Thomson relation.
The initial interface is prescribed as a perturbed circle of the form
\[
R(\theta,0) = R_0 + \delta_0 \cos(k\theta),
\]
where $\delta_0$ and $k$ denote the perturbation amplitude and wave number, respectively.

\begin{figure}[h]
    \centering
    \includegraphics[width=0.95\linewidth]{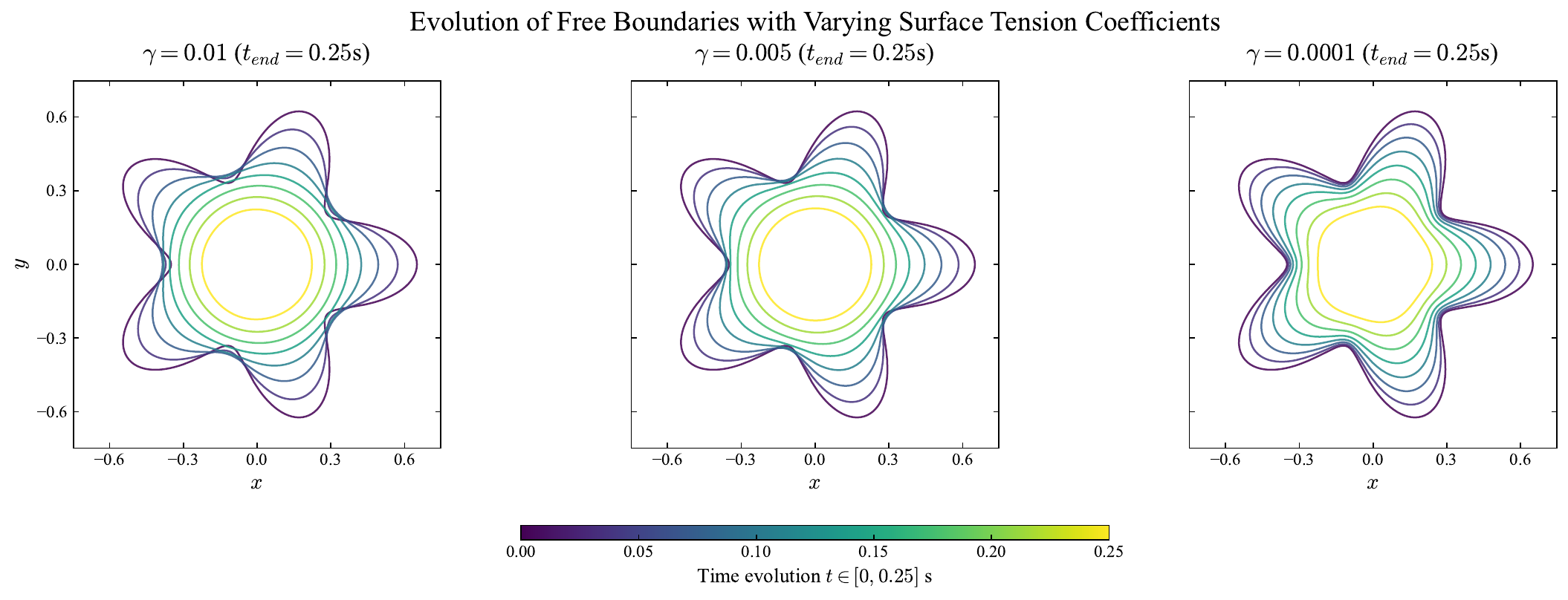}
    \caption{Evolution of the free boundary in the Mullins--Sekerka problem
for different values of the surface tension coefficient $\gamma$.}
    \label{fig:Gibbs}
\end{figure}

Figure~\ref{fig:Gibbs} illustrates the evolution of the free boundary for different
values of the curvature coefficient $\gamma$, with the initial perturbation parameters
$\delta_0 = 0.15$ and $k = 5$.
As $\gamma$ increases, the curvature regularization becomes more pronounced,
leading to a faster smoothing of high-curvature features and a suppression of sharp
interface corners.

\section{Conclusion}
\label{sec:conclusion}

In this work, we proposed a predictor--corrector operator splitting framework
for the numerical solution of Stefan problems, formulated within a randomized
functional approximation setting based on Extreme Learning Machines.
The central objective was to address the intrinsic non-convexity arising from
the strong coupling between the temperature field and the evolving free
boundary when both are treated as optimization variables.

By decomposing the coupled free boundary problem into a thermodynamic subproblem
and a kinematic geometry update, the proposed method transforms the original
nonlinear optimization into an alternating sequence of linear least-squares
problems.
Owing to the linear parameterization of both the field and the interface, each
subproblem is strictly convex and admits a unique minimizer.
From an algorithmic viewpoint, the resulting scheme can be interpreted as a
relaxed Picard-type fixed-point iteration on the discrete geometry parameter
space, for which local contractivity can be ensured through an appropriate
relaxation strategy.

A theoretical analysis was provided to justify the well-posedness of the
alternating iteration.
In particular, we showed that the thermodynamic and kinematic operators are
smooth mappings between finite-dimensional parameter spaces, and that the
relaxed composite operator defines a contraction in a neighborhood of the
solution.
The fixed point of the iteration was shown to be consistent with the
optimality conditions of the coupled residual minimization problem restricted
to the chosen approximation spaces.
This analysis clarifies the mathematical structure underlying the proposed
method and distinguishes it from monolithic neural network optimizations,
which typically involve non-convex loss landscapes.

Extensive numerical experiments, ranging from one-dimensional to
three-dimensional settings, demonstrate that the proposed framework achieves
high accuracy and robustness in resolving both the temperature field and the
free boundary.
The method remains stable across a wide range of benchmark problems, including
two-phase Stefan problems, configurations with interfacial thermal resistance,
self-similar Frank sphere solutions, and quasi-stationary Mullins--Sekerka
instabilities.
In all cases, the interface evolution and field reconstruction are resolved
with near machine precision, highlighting the effectiveness of the convex
splitting strategy.

From a broader perspective, the proposed approach should be viewed as a hybrid
numerical method that combines physics-driven operator splitting with
randomized functional approximation, rather than as an end-to-end learning
paradigm.
While the Picard-type iteration yields a linear convergence rate, its stability
and convexity properties make it particularly attractive as a refinement or
post-processing tool.
A natural direction for future work is therefore the development of hybrid
workflows, in which a coupled optimization method provides an initial
approximation that is subsequently refined by the proposed operator splitting
scheme.
Further extensions to multi-phase problems, topological interface changes, and
adaptive basis enrichment also constitute promising directions for future
research.

\section*{Acknowledgments} 
	This work was funded by the National Natural Science Foundation of China (grant No. 12571431).
%% If you have bib database file and want bibtex to generate the
%% bibitems, please use
%%
 \bibliographystyle{elsarticle-num-names} 
 \bibliography{reference}

%% else use the following coding to input the bibitems directly in the
%% TeX file.

%% Refer following link for more details about bibliography and citations.
%% https://en.wikibooks.org/wiki/LaTeX/Bibliography_Management

% \begin{thebibliography}{00}

% %% For numbered reference style
% %% \bibitem{label}
% %% Text of bibliographic item

% \bibitem{lamport94}
%   Leslie Lamport,
%   \textit{\LaTeX: a document preparation system},
%   Addison Wesley, Massachusetts,
%   2nd edition,
%   1994.

% \end{thebibliography}
\end{document}